\documentclass[preprint,11pt,nonatbib]{elsarticle}
\usepackage[utf8]{inputenc}
\usepackage[T1]{fontenc}

\usepackage{csquotes}

\makeatletter
\let\c@author\relax
\makeatother

\usepackage[backend=biber,style=apa]{biblatex}
\usepackage{amsmath}
\usepackage{float}
\usepackage{parskip}
\usepackage{subfig}
\usepackage{amsfonts}
\usepackage{amssymb}
\usepackage[ruled,vlined]{algorithm2e}
\usepackage{booktabs}
\usepackage{longtable}
\usepackage[inline]{enumitem}
\usepackage{mathtools}
\usepackage{placeins}
\usepackage[dvipsnames]{xcolor}

\usepackage{setspace}
\usepackage{geometry}
\usepackage{amsthm}
\newtheorem{proposition}{Proposition}
\newtheorem{corollary}{Corollary}
\newtheorem{example}{Example}
\theoremstyle{remark}
\newtheorem{remark}{Remark}
\theoremstyle{plain}

\usepackage{xurl}
\usepackage[pdftex,linkcolor=black,pdfborder={0 0 0}]{hyperref} 
\usepackage{calc} 
\usepackage{enumitem} 

\usepackage[all]{nowidow} 
\usepackage[protrusion=true,expansion=true]{microtype} 

\DeclareMathOperator{\minimize}{minimize}
\newcommand{\stt}{\mathrm{s.t.}}
\newcommand{\argmin}{\mathop{\mathrm{argmin\ }}\limits}

\newcommand{\F}{\mathcal{F}}
\newcommand{\E}{\mathbb{E}}

\newcommand{\R}{\mathbb{R}}
\newcommand{\T}{\mathcal{N}} 

\newcommand{\wmiddle}{\;\middle|\;} 
\newcommand{\br}[1]{\lbrack #1 \rbrack} 
\newcommand{\hist}[1]{\xi_{\br{#1}}} 
\newcommand{\histspace}[1]{\Xi_{\br{#1}}} 
\newcommand{\histpart}[2]{\Xi_{\br{#1}}^{\;#2}} 

\newcommand{\ch}{\text{ch}} 
\newcommand{\pa}{\text{pa}} 

\newcommand{\defeq}{\coloneqq}

\SetKwInput{KwIn}{Input}
\SetKwInput{KwOut}{Return}

\hypersetup{
pdfsubject = {},
pdftitle = {},
pdfauthor = {}
}

\begin{document}
\begin{frontmatter}
\title{Regret-Based Scenario-Tree Construction for Multistage
Stochastic Programs with Deterministic Constraints}

\author[1]{Robin Bruneel\corref{cor1}}
\author[1]{Mathijs Schuurmans}
\author[1]{Panagiotis Patrinos}

\affiliation[1]{organization={Department of Electrical Engineering (ESAT-STADIUS), KU Leuven},
            addressline={Kasteelpark Arenberg 10},
            city={Leuven},
            postcode={3001},
            state={Vlaams-Brabant},
            country={Belgium}}

\cortext[cor1]{Corresponding author. Email: robin.bruneel@kuleuven.be}

\begin{abstract}
Scenario trees are widely used to discretize multistage stochastic programs into forms that are tractable for numerical optimization. Yet, traditional scenario-tree construction methods are typically based on statistical similarity between the original uncertainty process and its discretization, and may fail to preserve the aspects of the uncertainty that matter most for decision-making.

This paper shows that, under a fixed probability measure and deterministic feasibility constraints, the gap between the optimal value of the original multistage stochastic program and that of its scenario-tree approximation decomposes exactly into a sum of nodewise regret terms, each measuring the optimality loss at a particular node.
This decomposition provides a clear target for scenario-tree construction: minimizing total regret across the tree. Motivated by this result, we propose a practical two-phase construction method for repeated stochastic optimization. In an offline phase, computationally inexpensive surrogates for decision regret are learned from data, and in the online phase, these surrogates guide scenario-tree construction. The resulting framework retains the structured optimization and constraint-handling capabilities of multistage stochastic programming while exploiting learned information about the uncertainty to make more effective use of a limited scenario-tree budget.

Numerical experiments on multistage stochastic storage and production-planning problems show that, for a fixed tree size, the proposed method constructs higher-quality scenario trees and achieves better closed-loop performance than traditional scenario-tree construction methods.
\end{abstract}
\begin{keyword}
Stochastic programming \sep Scenario trees \sep Scenario reduction
\sep Multistage optimization \sep Reinforcement learning
\end{keyword}
\end{frontmatter}

\section*{Nomenclature}

\noindent\textit{Abbreviations}

\noindent
\begin{tabular}{@{}p{0.2\linewidth}p{\dimexpr0.8\linewidth-2\tabcolsep\relax}@{}}
AR   & Autoregressive \\
MPC  & Model predictive control \\
DP   & Dynamic programming \\
MSP  & Multistage stochastic program \\
DQN  & Deep Q-network \\
RL   & Reinforcement learning \\
SMPC & Stochastic model predictive control \\
NAF  & Normalized advantage function
\end{tabular}

\begin{longtable*}[l]{@{}p{0.2\linewidth}p{\dimexpr0.8\linewidth-2\tabcolsep\relax}@{}}
\multicolumn{2}{@{}l}{\textit{Symbols}}\\[3pt]
\endfirsthead
\multicolumn{2}{@{}l}{\textit{Symbols (continued)}}\\[3pt]
\endhead
$t$, $T$, $t_{\mathrm{rem}}$
& Stage index, total horizon, and remaining horizon \\

$\xi_t$, $\hist{t}$, $\hist{t}^{(k)}$
& Stage uncertainty, its history, and sampled history $k$ \\

$x_t$, $x_{t,i}$
& Stage decision and shared decision at node $i$ \\

$X_t(x)$
& Feasible decision set given incoming decision $x$ \\

$\widetilde X_t(x)$
& Candidate decision set used during tree construction \\

$f_t$, $\widehat f_{t,i}$
& Stage cost and its conditional expectation at node $i$ \\

$\mathcal F_t$, $\mathcal G_t$
& Full and coarsened information at stage $t$ \\

$\T$, $\mathcal N_t$
& Scenario tree and its set of stage-$t$ nodes \\

$\kappa_t$
& Mapping from observed histories to stage-$t$ nodes \\

$\histpart{t}{i}$
& History-space cell associated with node $i$ \\

$\pa(i)$, $\ch(i)$
& Parent and children of node $i$ \\

$p_i$
& Probability of reaching node $i$ \\

$v^\star$, $\hat v(\kappa)$
& Optimal values of the original and tree problems \\

$J_\kappa(\mathbf x)$
& Expected cost of the feasible tree decisions $\mathbf x$ \\

$Q_t^\star$, $V_t^\star$
& Decision-value and optimal cost-to-go functions \\

$\widetilde Q_{t,\theta}$,
$\widetilde V_{t,\theta}$
& Surrogate decision-value and cost-to-go functions \\

$A_t^\star$, $\widetilde A_{t,\theta}$
& True and surrogate history-specific advantages \\

$\delta_{t,i}$,
$\widetilde\delta_{t,i}$
& True and surrogate nodewise regrets \\

$S_i$
& Indices of sampled scenarios assigned to node $i$ \\

$N$, $M$
& Construction sample count and evaluation episodes \\

$r_{t,i}$
& Sum of sampled surrogate advantages at stage $t$ under parent $i$ \\

$m_t$, $B$
& Additional-child budget across stage $t$ and its cap \\
\end{longtable*}

\section{Introduction}
Multistage stochastic programming provides a principled framework for sequential decision-making under uncertainty, with applications in finance \parencite{kouwenberg2001scenario}, operations \parencite{korpeoglu2011multi}, and energy systems \parencite{patrinos2011stochastic}.
Since multistage problems with continuous uncertainty are generally
intractable, finite-dimensional scenario-tree approximations
are commonly used instead. These approximations may involve two conceptually distinct operations \parencite{carpentier2009approximations}. 
The first groups observation histories into a finite number of tree nodes, requiring histories within each node to share a common decision and thereby coarsening the information available for decision-making.
The second replaces grouped uncertainty realizations by node representatives, yielding a discrete approximation of the underlying probability distribution.
In this paper, we isolate the effect of information coarsening on decision quality while keeping the underlying probability measure fixed.

Popular scenario-tree construction methods include sampling-based approaches \parencite{dupacova2000scenarios,infanger1992monte}, moment-matching techniques \parencite{hoyland2001momentmatching}, and optimal discretization methods based on probability metrics such as Wasserstein and nested distances \parencite{pflug2012distance,heitsch2009multistage,bruneel2025multistage}. These methods primarily target statistical fidelity of the uncertainty representation, rather than accounting directly for how grouping histories affects the optimization problem. Under a limited node budget, a statistically
accurate tree may therefore fail to preserve the distinctions that
matter most for decision-making.

Problem-dependent construction methods address this limitation by incorporating decision information into scenario selection and aggregation. In the two-stage setting, several authors have proposed methods that account for the impact of scenarios on the optimal solution \parencite{BertsimasDimitris2023OSRf,hewitt2022decisionbased,henrion2022problembased,KeutchayanJulien2023Psci}.
Extending these methods to multiple stages requires assessing
how the information coarsening affects both current decisions and
future recourse.
\textcite{keutchayan2020multistage} derive a decomposition involving optimization and numerical discretization
errors and guide construction using an integration-error upper bound.
\textcite{dempster2006sequential} adapts branching using
expected-value-of-perfect-information criteria obtained from
solutions of successive sampled optimization problems.

Our approach guides tree construction using an exact decomposition
of the information-coarsening loss into nodewise regret terms,
rather than an integration-error upper bound.
We focus on settings where closely related
multistage optimization problems are solved repeatedly as new
information becomes available. This repeated use allows the
upfront cost of learning decision-value surrogates to be
amortized over many subsequent optimizations. Once trained,
these surrogates provide inexpensive regret estimates that
guide both the grouping of histories and the allocation of
branches, avoiding repeated solution of the multistage
problem during tree construction.

This paper makes two methodological contributions:
\begin{itemize}
    \item We derive a nodewise regret representation of the
    information-coarsening gap under a fixed probability measure
    and deterministic feasibility constraints. This exact
    representation provides a problem-dependent objective for
    scenario-tree construction.

    \item We develop a greedy stagewise algorithm that constructs
scenario trees by approximately minimizing regret estimated
from decision-value surrogates learned offline. A surrogate-error
bound relates the construction criterion to the true tree
optimality gap.
\end{itemize}

Experiments on stochastic energy storage and production
planning demonstrate improved closed-loop performance over
the considered sampling- and distance-based construction
methods at matched tree sizes. The gains persist even
when the surrogate's standalone greedy policy performs poorly.

The remainder of this paper is organized as follows.
Section~\ref{sec:problem_formulation} introduces the multistage stochastic optimization problem.
Section~\ref{sec:clustering} derives the information-coarsening regret decomposition, and Section~\ref{sec:tree_construction} develops the construction method.
Section~\ref{sec:results} presents the numerical experiments.

\section{Problem Formulation}\label{sec:problem_formulation}
We consider a discrete-time, finite-horizon multistage stochastic optimization problem in which decisions are taken sequentially as uncertainty unfolds over time.
Throughout the paper, we assume deterministic feasibility constraints, so that the uncertainty affects only the objective function.
Let $(\Omega, \mathcal{F}, \mathbb{P})$ be a probability space and let $T \in \mathbb{N}$ denote the planning horizon.
For $n\in\mathbb N$, write $\br{n}\defeq\{1,\ldots,n\}$ if $n > 0$, (and $\br{0} = \emptyset$).
At each stage $t \in \br{T}$, the newly observed uncertainty is represented by a random variable $\xi_t: \Omega \to \Xi_t \subseteq \mathbb{R}^{n_\xi}$ and we denote the associated data process by
\begin{equation}
    \hist{t} \defeq (\xi_1, \xi_2, \ldots, \xi_t): \Omega \to \histspace{t}, \qquad \histspace{t} \defeq \prod_{s=1}^t \Xi_s.
\end{equation}
The information available for decision-making is described by the natural filtration $(\mathcal{F}_t)_{t=0}^T$, where $\mathcal{F}_t$ is the $\sigma$-algebra generated by the
process $\hist{t}$:
\begin{equation}
    \mathcal{F}_t \defeq \sigma(\hist{t}), \quad t \in \br{T}, \quad \mathcal{F}_0 = \{\emptyset, \Omega\}.
\end{equation}

At stage $t$, the decision-maker observes $\xi_t$ and uses this information to select
a decision $x_t:\Omega\to\mathcal{X}_t\subseteq\mathbb{R}^{n_x}$.
To ensure non-anticipativity, each $x_t$ must be
$\mathcal{F}_t$-measurable. Interstage constraints further restrict
this choice to $X_t(x_{t-1})$, where
$X_t:\mathcal{X}_{t-1}\rightrightarrows\mathcal{X}_t$ is a
deterministic set-valued mapping. Each feasible decision incurs
a stage cost $f_t(x_t,\xi_t)$, where
$f_t:\mathcal{X}_t\times\Xi_t\to\mathbb{R}$ is measurable.
We seek a decision process $(x_t)_{t=1}^T$ that minimizes the
expected sum of these costs while satisfying the above requirements.
For a fixed initial decision $x_0\in\mathcal{X}_0$, this leads
to the following multistage stochastic program (MSP):
\begin{equation}\label{eq:full-problem}
v^\star \defeq \inf_{(x_t)_{t=1}^T}\left\{
\E\!\left[\sum_{t=1}^T f_t(x_t,\xi_t)\right]
\ \wmiddle\
\begin{array}{ll}
x_t\in X_t\!\left(x_{t-1}\right) \text{ a.s.}, & t\in\br{T},\\[1mm]
x_t \text{ is }\mathcal F_t\text{-measurable}, & t\in\br{T}
\end{array}
\right\}.
\end{equation}
Under standard measurability and integrability assumptions, the multistage problem also admits a dynamic programming (DP) formulation. For an incoming decision $x \in \mathcal{X}_{t-1}$ and history $\hist{t} \in \histspace{t}$, let $V_t^\star(x,\hist{t})$ denote the optimal expected cost from stage $t$ through stage $T$.
The cost-to-go functions $V_t^\star : \mathcal{X}_{t-1} \times \histspace{t} \to \mathbb{R}$ satisfy
\[
V_t^\star(x,\hist{t})
= \inf_{x^+\in X_t(x)}
\Big\{
f_t(x^+,\xi_{t})+\mathbb E\left[V_{t+1}^\star(x^+,\hist{t+1})\wmiddle \hist{t}\right]
\Big\},
\]
where $x^+$ denotes the current decision and
$V_{T+1}^\star\equiv0$.
The optimal value before the first observation is
\[
    v^\star=\E\!\left[V_1^\star(x_0,\hist{1})\right].
\]

The multistage stochastic program \eqref{eq:full-problem} is closely related to a class of stochastic optimal control problems 
\parencite{lan2024numerical,bertsekas2004stochastic}, with the caveat that 
due to the deterministic nature of the inter-stage constraint mapping $X_t$,
stochastic dynamics are generally not admissible.

\begin{example}[Optimal control]\label{ex:optimal-control}
  Consider the discrete-time dynamical system 
  \(
    z_{t+1} = \varphi_t(z_t, u_t),
  \)
  with states $z_t \in \R^{n_z}$, $t \in [T+1]$, and control
  inputs $u_t \in \R^{n_u}$, $t \in [T]$.
  The associated stochastic optimal control problem is
  \begin{equation}\label{eq:stochastic-ocp}
    \begin{aligned}
      \minimize\;&
      \E\!\left[\sum_{t=1}^{T} \ell_t(z_t, u_t, \xi_t) + \ell_{T+1}(z_{T+1})\right]\\
      \stt\;&
      z_{t+1} = \varphi_t(z_t, u_t), \text{ a.s.}, & t\in [T],\\
                                                   &(z_t, u_t) \in Z_t, \text{ a.s.}, & t \in [T],\\
                                                   &u_t \text{ is }\mathcal F_t\text{-measurable}, & t\in [T],\\
                                                   &z_{T+1} \in Z_{T+1}, \text{ a.s.},\\
    \end{aligned}
  \end{equation}
  where $z_1=\bar z$ is a given initial state,
  and $Z_t \subseteq \R^{n_z + n_u}$, $t \in [T]$,
  and $Z_{T+1} \subseteq \R^{n_z}$ are given constraint sets.
  Assume that the dynamics maps and cost functions are measurable,
  the constraint sets are Borel, and the costs satisfy the standing
  integrability assumptions.
  By induction, using measurability of the dynamics,
  every adapted control process induces states
  $z_t$ that are $\mathcal F_{t-1}$-measurable, so that
  $x_t\defeq(z_t,u_t)$ is $\mathcal F_t$-measurable for every
  $t\in\br{T}$.

  For ease of notation, define the constant map
  $\varphi_0(x) \equiv \bar z$.
  Then, for any given $x_0 \in \R^{n_z + n_u}$, define
  \[
    X_t(x_{t-1}) = \big\{ x = (z, u) \in Z_t \mid z = \varphi_{t-1}(x_{t-1}) \big\}, \quad t \in [T-1],
  \]
  and $X_T(x_{T-1}) = \{ (z, u) \in Z_T \mid z = \varphi_{T-1}(x_{T-1}), \; \varphi_T(z,u) \in Z_{T+1}\}$.
  Setting 
  \[
    f_t((z,u),\xi)\defeq
    \begin{cases}
        \ell_t(z,u,\xi), &1\leq t<T,\\
        \ell_T(z,u,\xi)
        +\ell_{T+1}\!\left(\varphi_T(z,u)\right), &t=T,
    \end{cases}
  \]
  we obtain a correspondence between feasible adapted processes in \eqref{eq:stochastic-ocp} and
  \eqref{eq:full-problem}, with identical total costs almost surely.
  Hence, these problems are equivalent.
\end{example}

In the absence of state constraints, \eqref{eq:full-problem} can 
additionally model stochastic optimal control problems for linear uncertain systems, 
using a change of variables akin to \textit{purified outputs} \parencite[\S 4.2]{ben2006extending}.

\begin{example}[Stochastic linear optimal control]
  Consider the stochastic optimal control problem
  \begin{equation}\label{eq:stochastic-ocp-lin}
    \begin{aligned}
      \minimize\;&
      \E\!\left[\sum_{t=1}^{T} \ell_t(z_t, u_t, \xi_t) + \ell_{T+1}(z_{T+1})\right]\\
      \stt\;&
      z_{t+1} = A_t z_t + B_t u_t + G_t \xi_t, \text{ a.s.}, & t\in [T],\\
                                                   &u_t \in U_t, \text{ a.s.}, & t \in [T],\\
                                                   &u_t \text{ is }\mathcal F_t\text{-measurable}, & t\in [T],
    \end{aligned}
  \end{equation}
  where for all $t \in [T]$, $U_t \subseteq \R^{n_u}$ is a deterministic constraint set.
  Introduce the change of variables
  \(
    e_1 = 0,
  \)
  \( 
    e_{t+1} = A_t e_t + G_t \xi_t
  \),
  \(
    q_t = z_t - e_t,
  \)
  the dynamics recursion becomes 
  \begin{equation*}
      q_{t+1} = A_t q_t + B_t u_t, 
  \end{equation*}
  with $q_1 = z_1$. Defining $\eta_t = (\xi_t, e_t)$, it is easy to see
  that $\eta_t$ is $\F_t$-measurable for all $t \in [T]$ (in fact,
  $\sigma(\eta_{[t]}) = \sigma(\xi_{[t]}) = \F_t$).
  Setting $x_t = (q_t, u_t)$, and defining 
  \(
    f_t((q, u), (\xi, e)) = \ell_t(q + e, u, \xi),
  \) 
  \(t \in [T-1] \), and 
  \[ 
    f_T((q, u), (\xi, e)) =
      \ell_T(q + e, u, \xi) + \ell_{T+1}(A_T(q+e) + B_T u + G_T \xi),
  \]
  we arrive at a special case of \eqref{eq:stochastic-ocp}.
\end{example}

Nonlinear stochastic control problems with deterministic input constraints and no state constraints also fit \eqref{eq:full-problem}.
The states can be eliminated by expressing them in terms of control and disturbance histories, which then serve as the decision and uncertainty variables, respectively. In that setting, the dimensions of both the decision and uncertainty variables grow with the horizon.


\section{Scenario Tree Approximation}\label{sec:clustering}
In this section, we construct an approximation of \eqref{eq:full-problem} by coarsening the information structure available to the decision maker.
Specifically, we group observation histories into finitely
many nodes at each stage and require histories within
each node to share a common decision. The result is a finite-dimensional and computationally more tractable tree problem, at the expense of a potential loss in optimality.
We first introduce a scenario-tree notation to represent such coarsenings, then derive the corresponding optimization problem, and analyze its optimality loss.

\subsection{Scenario Trees}\label{sec:scenario_trees}
We represent the coarsened filtration by a rooted scenario tree $\T$, where $\mathcal N_t \subseteq \T$ denotes the set of nodes at stage $t$ so that $\T = \bigcup_{t=0}^T \mathcal{N}_t$.
Each node $i \in \mathcal{N}_t$ has a unique parent $\pa(i) \in \mathcal{N}_{t-1}$.
For every nonterminal node $i \in \mathcal{N}_t$, its children are given by
\[
\ch(i) \defeq \left\{\, j\in\mathcal N_{t+1} \wmiddle \pa(j) = i \,\right\}.
\]
Nodes in $\mathcal{N}_T$ are terminal and therefore have no children.
The coarsened information structure is represented by a family of mappings
\[
\kappa_t : \histspace{t} \to \mathcal N_t, \qquad t \in \br{T},
\]
which assigns every scenario history $\hist{t}$ to a node in the scenario tree.
The mapping $\kappa_t$ induces a partition of the history space $\histspace{t}$ into disjoint cells. For each node $i \in \mathcal{N}_t$, its associated partition cell is:
\begin{equation}\label{eq:def-part}
    \histpart{t}{i} \defeq \kappa_t^{-1}(\{i\}) = \left\{\hist{t} \in \histspace{t} \wmiddle \kappa_t(\hist{t}) = i\right\},
\end{equation}
where every node $i\in\mathcal N_t$ has probability
$p_i=\mathbb P(\hist{t}\in\histpart{t}{i})$,
with $\sum_{i\in\mathcal N_t}p_i=1$.
An example scenario tree is shown in Figure~\ref{fig:scenario_tree}.
The assignments must be consistent with the tree's parent relation:
\begin{equation}\label{eq:time-consistency}
    \kappa_{t-1}(\hist{t-1})
    =
    \pa\!\left(\kappa_t(\hist{t})\right),
    \qquad t=2,\ldots,T.
\end{equation}
Thus, histories assigned to the same node must also be assigned to
the same parent node. This time consistency ensures that the node
index at stage $t$ determines the preceding node indices and that
the resulting information structure is a filtration.

\begin{figure}
    \centering
    \includegraphics[width=0.3\textwidth]{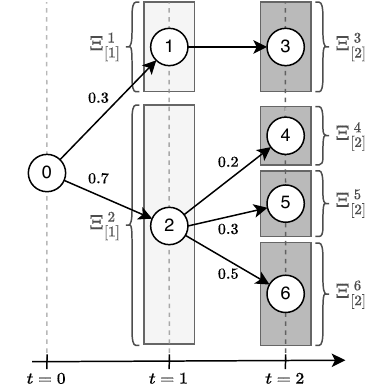}
    \caption{A scenario tree representing time-consistent partitions
    of uncertainty histories. Each node corresponds to a cell of the
    history space at its stage, and the edge labels indicate
    conditional transition probabilities.}\label{fig:scenario_tree}
\end{figure}

\subsection{Scenario-Tree Problem Formulation}
We approximate the original multistage stochastic problem by replacing its information structure with a discrete scenario tree.
Under a time-consistent stagewise partitioning $\kappa = (\kappa_1,\ldots,\kappa_T)$, the information available at stage $t$ is reduced from the full history $\hist{t}$ to the node index $\kappa_t(\hist{t})$.
This approximation leads to a coarser information structure $(\mathcal{G}_t)_{t=0}^T$ defined by
\begin{equation}
    \mathcal G_0=\mathcal F_0,\qquad
\mathcal G_t\defeq\sigma(\kappa_t\circ\hist{t})
\subseteq\mathcal F_t,\quad t\in\br{T}.
\end{equation}
The resulting information-coarsened problem is obtained by restricting admissible decision processes to be adapted to this coarser filtration:
\begin{equation}\label{eq:discretized-problem-og}
    \hat{v}(\kappa) \defeq \inf_{(x_t)_{t=1}^T} \left\{
    \mathbb{E}\left[ \sum_{t=1}^T f_t(x_t, \xi_t) \right]
    \wmiddle
    \begin{array}{ll}
    x_t \in X_t\left(x_{t-1}\right) \text{ a.s.}, & t\in \br{T},\\
    x_t \text{ is } \mathcal{G}_t\text{-measurable}, & t\in \br{T}
    \end{array}
    \right\}.
\end{equation}
The notation $\hat{v}(\kappa)$ emphasizes the dependence on the chosen partitioning.
Because $\mathcal G_t \subseteq \mathcal F_t$, every feasible policy in \eqref{eq:discretized-problem-og} is also feasible in \eqref{eq:full-problem}, and hence $\hat v(\kappa)\ge v^\star$.
Moreover, since all histories assigned to the same node share the same decision, \eqref{eq:discretized-problem-og} admits the equivalent finite-dimensional tree formulation stated in Proposition~\ref{lem:discretized_DP}.

\begin{proposition}[Representation of the information-coarsened problem]\label{lem:discretized_DP}
The information-coarsened problem~\eqref{eq:discretized-problem-og} is equivalent to a finite-dimensional tree problem with
node decisions $\mathbf x = \{x_{t,i}\}_{t\in\br{T},\,i\in\mathcal N_t}$:
\begin{equation}\label{eq:discretized-problem}
    \hat v(\kappa) = \inf_{\{x_{t,i}\}}
    \left\{
    \sum_{t=1}^T \sum_{i\in\mathcal N_t} p_{i}\,\widehat f_{t,i}(x_{t,i})
    \wmiddle
    x_{t,i}\in X_{t}\left(x_{t-1,\pa(i)}\right)
    \right\},
\end{equation}
where
\begin{align*}
    \widehat{f}_{t,i}(x) \defeq & \ \mathbb{E}\left[ f_t(x, \xi_{t}) \wmiddle \hist{t} \in \histpart{t}{i} \right].
\end{align*}
\end{proposition}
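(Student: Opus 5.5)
The plan is to build an explicit cost-preserving correspondence between $\mathcal G$-adapted feasible policies in \eqref{eq:discretized-problem-og} and feasible node decisions in \eqref{eq:discretized-problem}. Once we have it, we take infima on both sides.

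\emph{Step 1: from policies to node decisions.} Each $\mathcal G_t=\sigma(\kappa_t\circ\hist{t})$ is generated by a random variable with finitely many values. By the Doob--Dynkin lemma, every $\mathcal G_t$-measurable $x_t$ can therefore be written as $x_t=g_t(\kappa_t(\hist{t}))$ for some map $g_t:\mathcal N_t\to\mathcal X_t$. Setting $x_{t,i}\defeq g_t(i)$ gives a node decision; only nodes with $p_i>0$ matter, and I will assume $p_i>0$ for all nodes, or else restrict attention to such nodes.

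\emph{Step 2: feasibility transfers in both directions.} On the event $\{\kappa_t(\hist{t})=i\}$, the time-consistency condition \eqref{eq:time-consistency} gives $\kappa_{t-1}(\hist{t-1})=\pa(i)$. Hence $x_{t-1}=x_{t-1,\pa(i)}$ and $x_t=x_{t,i}$ on that event, and for $t=1$ we use $x_{0,\pa(i)}\defeq x_0$ at the root. So the constraint $x_t\in X_t(x_{t-1})$ a.s.\ is equivalent to $x_{t,i}\in X_t(x_{t-1,\pa(i)})$ for every $i$ with $p_i>0$. The deterministic nature of $X_t$ is what lets the almost-sure constraint collapse to one constraint per node. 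The converse direction is direct: given node decisions, define $x_t\defeq x_{t,\kappa_t(\hist{t})}$. This is $\mathcal G_t$-measurable and feasible by the same identity.

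\emph{Step 3: equality of costs.} Split the expectation over the partition $\{\histpart{t}{i}\}_{i\in\mathcal N_t}$:
\[
\E[f_t(x_t,\xi_t)]=\sum_{i\in\mathcal N_t}\E\big[f_t(x_{t,i},\xi_t)\mathbf 1\{\hist{t}\in\histpart{t}{i}\}\big]=\sum_{i\in\mathcal N_t}p_i\,\widehat f_{t,i}(x_{t,i}).
\]
Summing over $t$ shows that the two objectives coincide under the correspondence.

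Taking infima over the bijective (up to null nodes) correspondence then gives equality of the optimal values. The finite-dimensionality follows because $\mathcal N_t$ is finite.

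The main obstacle is the measure-theoretic bookkeeping rather than any real difficulty. Three points need care: nodes with $p_i=0$, for which $\widehat f_{t,i}$ is undefined and should be dropped or assigned an arbitrary feasible decision; the requirement that feasibility hold a.s.\ rather than pointwise; and integrability, so that the interchange of sum and expectation is valid under the standing assumptions (or with values allowed in $(-\infty,\infty]$). I would handle these by restricting $\mathcal N_t$ to positive-probability nodes at the outset, noting that this does not change either problem.
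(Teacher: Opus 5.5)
Your proposal is correct and follows the same route as the paper. Both arguments rest on three steps: a $\mathcal G_t$-measurable decision is constant on each cell (your Doob--Dynkin step), the expected stage cost decomposes over the partition into $\sum_i p_i\,\widehat f_{t,i}(x_{t,i})$, and the converse uses $x_t = x_{t,\kappa_t(\hist{t})}$. You are more careful than the paper, which leaves implicit two points you handle: the use of time consistency \eqref{eq:time-consistency} to turn the almost-sure constraint into the nodewise constraint $x_{t,i}\in X_t(x_{t-1,\pa(i)})$, and the treatment of zero-probability nodes.
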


\begin{proof}
Let $(x_t)_{t=1}^T$ be feasible for \eqref{eq:discretized-problem-og}.
Since each $x_t$ is $\mathcal G_t$-measurable and $\mathcal G_t$ is generated by the mapping $\kappa_t\circ\hist{t}$, the decision $x_t$ must be constant
on each cell $\histpart{t}{i}$. That is, there exist $x_{t,i}\in\mathcal X_t$ such that
\[
x_t(\omega)=x_{t,i} \quad \forall \omega \in \Omega: \hist{t}(\omega)\in\histpart{t}{i}, \quad \forall i \in \mathcal{N}_t.
\]
Finally, since $\big\{ \histpart{t}{i} \big\}_{i \in \mathcal{N}_t}$ is a partition of $\histspace{t}$, the expected stage cost decomposes into:
\begin{align*}
    \mathbb E\!\left[f_t(x_t,\xi_{t})\right]
    =&\sum_{i\in\mathcal N_t} \mathbb P(\hist{t} \in\histpart{t}{i})\
    \mathbb E\!\left[f_t(x_{t,i},\xi_{t})\mid \hist{t}\in\histpart{t}{i}\right]\\
    = &\sum_{i\in\mathcal N_t} p_{i}\,\widehat f_{t,i}(x_{t,i}),
\end{align*}
which yields the objective of \eqref{eq:discretized-problem}.
Conversely, any feasible collection $\{x_{t,i}\}$ for \eqref{eq:discretized-problem} defines a feasible
$\mathcal G_t$-measurable policy via $x_t(\omega)\defeq x_{t,\kappa_t(\hist{t}(\omega))}$. This establishes the equivalence and completes the proof.
\end{proof}

\subsection{Regret Decomposition of the Optimality Gap}
In this section, we characterize the optimality gap between the original problem
\eqref{eq:full-problem} and its scenario-tree approximation
\eqref{eq:discretized-problem}.
Throughout this section, we assume that the value functions
are finite and that the cost and value-function terms are
measurable and integrable along the feasible policies under consideration.
For a feasible decision $x^+ \in \mathcal{X}_t$ and observed history $\hist{t} \in \histspace{t}$, we
define the decision-value function of the original problem as
\begin{align}\label{eq:q-function}
    Q_t^\star(x^+, \hist{t}) \defeq & f_{t}(x^+,\xi_{t}) + \mathbb{E}\left[ V_{t+1}^\star(x^+, \hist{t+1}) \wmiddle \hist{t} \right],
\end{align}
so that $V_t^\star(x, \hist{t}) = \inf_{x^+\in X_t(x)} Q_t^\star(x^+,\hist{t})$. This function captures the expected cost of taking a specific decision $x^+$ at stage $t$ given the observed history $\hist{t}$ and then following the optimal policy thereafter.
In the control interpretation of the multistage stochastic program,
this corresponds to the optimal action-value function, or $Q$-function,
used in reinforcement learning \parencite{sutton1998reinforcement},
expressed here under a cost-minimization convention.

For an incoming decision $x$ and a feasible current decision $x^+\in X_t(x)$,
define the advantage function
\begin{equation}
A_t^\star(x^+;x,\hist{t})
\defeq Q_t^\star(x^+,\hist{t})-V_t^\star(x,\hist{t}).
\label{eq:advantage-function}
\end{equation}
Under the cost-minimization convention, this is the excess cost of choosing
$x^+$ instead of an optimal decision for the observed history.

For any node $i\in\mathcal N_t$, incoming decision $x\in\mathcal X_{t-1}$, and candidate decision
$x^+\in X_t(x)$, the nodewise regret is the conditional mean of this advantage:
\begin{equation}\label{eq:nodewise-regret}
\delta_{t,i}(x^+;x)
\defeq
\mathbb E\!\left[
A_t^\star(x^+;x,\hist{t})
\ \middle|\
\hist{t}\in \histpart{t}{i}
\right].
\end{equation}
This quantity measures the expected cost increase of enforcing the common decision $x^+$
across all scenario histories assigned to node $i$,
instead of the optimal history-dependent decision.
In this comparison, only the current decision is forced to be common across
the node. Subsequent decisions are assumed to use the full observation
history optimally.
Note that $\delta_{t,i}(x^+;x) \ge 0$, with equality if and only if $x^+$ is an
optimal decision for almost all histories in $\histpart{t}{i}$.
The following proposition shows that these local penalties, weighted by node
probabilities, add up to the total loss of the tree policy.

\begin{proposition}[Regret decomposition]
\label{lem:error_decomp}
Let $\mathbf x=\{x_{t,i}\}$ be any feasible collection of
node decisions for a measurable, time-consistent partitioning
$\kappa$. Define the induced decision
policy by $x_t(\omega)=x_{t,\kappa_t(\hist{t}(\omega))}$ and
its expected cost by
\[J_\kappa(\mathbf x)\defeq\mathbb E\left[\sum_{t=1}^T f_t(x_t,\xi_t)\right].\]
Then
\begin{equation}
J_\kappa(\mathbf x)-v^\star
=
\sum_{t=1}^T\sum_{i\in\mathcal N_t}
p_i\,\delta_{t,i}
\bigl(x_{t,i};x_{t-1,\pa(i)}\bigr).
\label{eq:nodewise-regret-decomposition}
\end{equation}
In particular, for an optimal tree solution
$\hat{\mathbf x}$ of \eqref{eq:discretized-problem}, the left-hand side equals
$\hat v(\kappa)-v^\star$.
\end{proposition}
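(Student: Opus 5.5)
The plan is a telescoping argument along the induced policy. Write $x_t(\omega)=x_{t,\kappa_t(\hist{t}(\omega))}$ with $x_0$ the fixed initial decision. Feasibility of $\mathbf x$ gives $x_t\in X_t(x_{t-1})$ pointwise. This holds because time consistency \eqref{eq:time-consistency} guarantees $\kappa_{t-1}(\hist{t-1})=\pa(\kappa_t(\hist{t}))$, so the incoming decision at node $i$ is exactly $x_{t-1,\pa(i)}$. Define the pathwise advantages
\[
a_t(\omega)\defeq A_t^\star\bigl(x_t;x_{t-1},\hist{t}\bigr)=Q_t^\star(x_t,\hist{t})-V_t^\star(x_{t-1},\hist{t}).
\]
By \eqref{eq:q-function}, $Q_t^\star(x_t,\hist{t})=f_t(x_t,\xi_t)+\E[V_{t+1}^\star(x_t,\hist{t+1})\mid\hist{t}]$. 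Hence
\[
f_t(x_t,\xi_t)=a_t+V_t^\star(x_{t-1},\hist{t})-\E\bigl[V_{t+1}^\star(x_t,\hist{t+1})\mid\hist{t}\bigr].
\]

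Next I take expectations and sum over $t\in\br{T}$. Because $x_t$ is $\mathcal G_t\subseteq\mathcal F_t$-measurable, the tower property gives
\[
\E\bigl[\E[V_{t+1}^\star(x_t,\hist{t+1})\mid\hist{t}]\bigr]=\E\bigl[V_{t+1}^\star(x_t,\hist{t+1})\bigr].
\]
This requires that $x_t$, a function of $\hist{t}$, can be plugged into the conditional expectation. That is legitimate since $x_t$ takes finitely many values, each constant on a cell. So on each cell $\{\hist{t}\in\histpart{t}{i}\}$ the identity holds with the constant $x_{t,i}$, and we sum over the finitely many nodes.

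The sum then telescopes. With $V_{T+1}^\star\equiv0$ we get
\[
J_\kappa(\mathbf x)=\sum_{t=1}^T\E[a_t]+\E[V_1^\star(x_0,\hist{1})]=\sum_{t=1}^T\E[a_t]+v^\star.
\]
Finally, I split each $\E[a_t]$ over the partition $\{\histpart{t}{i}\}_{i\in\mathcal N_t}$, exactly as in the proof of Proposition~\ref{lem:discretized_DP}. On cell $i$ we have $x_t=x_{t,i}$ and $x_{t-1}=x_{t-1,\pa(i)}$, so
\[
\E[a_t]=\sum_{i\in\mathcal N_t}p_i\,\E\bigl[A_t^\star(x_{t,i};x_{t-1,\pa(i)},\hist{t})\mid\hist{t}\in\histpart{t}{i}\bigr]=\sum_{i\in\mathcal N_t}p_i\,\delta_{t,i}\bigl(x_{t,i};x_{t-1,\pa(i)}\bigr)
\]
by \eqref{eq:nodewise-regret}; nodes with $p_i=0$ contribute nothing. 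For the last claim, Proposition~\ref{lem:discretized_DP} shows that $J_\kappa(\mathbf x)$ equals the tree objective. Therefore, at an optimal $\hat{\mathbf x}$ we have $J_\kappa(\hat{\mathbf x})=\hat v(\kappa)$.

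The main obstacle is the conditional-expectation step, where a random decision is substituted into $\E[V_{t+1}^\star(\cdot,\hist{t+1})\mid\hist{t}]$. I would handle it through the finite, cellwise-constant structure of $x_t$ rather than a general measurable-selection argument. I would also invoke the standing integrability assumptions so that every term is finite, which is what allows the expectations to be split and the sum to telescope.
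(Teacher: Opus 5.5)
Your proposal is correct and uses essentially the same telescoping (finite-horizon performance-difference) argument as the paper. The only difference is direction: you rewrite each stage cost via $Q_t^\star$, telescope, and then split $\E[a_t]$ over the cells, whereas the paper starts from the weighted regret sum, recombines it into $\E\bigl[\sum_t (Q_t^\star(x_t,\hist{t})-V_t^\star(x_{t-1},\hist{t}))\bigr]$, and then cancels. Your cellwise justification for substituting the $\mathcal G_t$-measurable decision $x_t$ into $\E[V_{t+1}^\star(\cdot,\hist{t+1})\mid\hist{t}]$ makes explicit a step the paper's proof leaves implicit.
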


\begin{proof}
By time consistency, the incoming decision at node $i$ is
$x_{t-1,\pa(i)}$. Using \eqref{eq:nodewise-regret} and \eqref{eq:advantage-function}, we obtain
\begin{align*}
&\sum_{t=1}^T\sum_{i\in\mathcal N_t}
p_i\,\delta_{t,i}
\bigl(x_{t,i};x_{t-1,\pa(i)}\bigr)
=
\mathbb E\!\left[
\sum_{t=1}^T
\bigl(Q_t^\star(x_t,\hist{t})
      -V_t^\star(x_{t-1},\hist{t})\bigr)
\right].
\end{align*}
Substituting the definition of $Q_t^\star$ and cancelling successive value-function terms
yields
\[
\mathbb E\!\left[\sum_{t=1}^T f_t(x_t,\xi_t)\right]
-\mathbb E[V_1^\star(x_0,\hist{1})]
=
J_\kappa(\mathbf x)-v^\star.
\]
The optimal-tree statement follows from
$J_\kappa(\hat{\mathbf x})=\hat v(\kappa)$.
\end{proof}

The decomposition expresses the finite-horizon performance-difference
identity~\parencite[Lemma~6.1]{kakade2002approximately} in scenario-tree form.
Evaluating it at the
optimal node decisions gives the ideal tree-construction
criterion, with the dependence of the probabilities and
decisions on the partitioning made explicit:
\begin{equation}
\min_{\kappa\in\mathcal K} \hat v(\kappa)-v^\star
=
\min_{\kappa\in\mathcal K}
\sum_{t=1}^T\sum_{i\in\mathcal N_t}
p_i(\kappa)\,
\delta_{t,i}\bigl(
\hat x_{t,i}(\kappa);\hat x_{t-1,\pa(i)}(\kappa), \kappa
\bigr),
\label{eq:ideal-regret-tree-objective}
\end{equation}
where $\mathcal K$ contains all measurable, time-consistent partitionings
within the node budget.

\begin{remark}[Connection to state abstraction]
In the control formulation, information coarsening maps the state
$\big(x_{t-1},\hist{t}\big)$ to $\big(x_{t-1},\kappa_t(\hist{t})\big)$,
retaining the incoming decision while replacing the uncertainty
history by its scenario-tree node index.
This is a form of state abstraction in the terminology of Markov
decision processes \parencite[\S3.1]{li2006towards}.
\end{remark}

\begin{example}\label{ex:regret}
Consider a two-stage problem with $\xi_1=0$ and equally likely
second-stage realizations $\xi_2\in\{-10,-8,-2,4\}$.
The decisions satisfy $x_1\in[0,4]$ and $x_2\in[-x_1,x_1]$,
with stage costs $\tfrac12 x_1^2$ and $\xi_2x_2$, respectively.
After observing $\xi_2$, the optimal second-stage decision is
$x_2=-x_1\operatorname{sgn}(\xi_2)$, giving expected second-stage
cost $-x_1\mathbb E[|\xi_2|]=-6x_1$. Hence,
\[
v^\star=\min_{0\le x_1\le4}
\left\{\tfrac12x_1^2-6x_1\right\}=-16,
\]
attained at $x_1=4$.
Figure~\ref{fig:regret-example}a
shows the four realizations and their optimal decisions.
We compare two ways of grouping these realizations into
two second-stage nodes.

The partition $\{-10,-8\}$ and $\{-2,4\}$, shown in
Figure~\ref{fig:regret-example}b,
has conditional means $-9$ and $1$, each with probability $1/2$.
Among all distributions supported on at most two points,
the resulting representative distribution minimizes the
2-Wasserstein distance to the original second-stage distribution.
Its squared distance is $5$.
However, this grouping incurs positive regret,
as we show below.
The two nodes have probability $1/2$ and conditional means
$-9$ and $1$, respectively.
For fixed $x_1$, their optimal shared decisions are therefore
$x_{2,2}=x_1$ and $x_{2,3}=-x_1$, giving
\[
\hat v(\kappa)=\min_{0\le x_1\le4}
\left\{\tfrac12x_1^2-5x_1\right\}=-12.
\]
The optimal tree decisions are $\hat x_{1,1}=4$,
$\hat x_{2,2}=4$, and $\hat x_{2,3}=-4$.

The first-stage decision remains optimal for the original
problem, and the shared decision at node $2$ is optimal for
both of its realizations. Their regret terms are therefore zero.
At node $3$, the shared decision $-4$ is optimal for $\xi_2=4$,
but not for $\xi_2=-2$: in the latter case, choosing $-4$
instead of $4$ raises the second-stage cost from $-8$ to $8$.
Averaging this loss within the node gives
$\delta_{2,3}(-4;4)=\tfrac12(16+0)=8$.
The regret decomposition therefore yields
\[
\hat v(\kappa)-v^\star
=(-12)-(-16)
=0+\tfrac12\cdot0+\tfrac12\cdot8=4.
\]

The alternative partition $\{-10,-8,-2\}$ and $\{4\}$,
shown in
Figure~\ref{fig:regret-example}c,
groups realizations by sign.
Its nodes have probabilities $3/4$ and $1/4$.
For every fixed $x_1$, each group shares an optimal second-stage
decision, so this partition preserves the original objective
and attains $v^\star$.
Its regret is therefore zero, despite its larger squared
2-Wasserstein distance of $26/3$.
Thus, a Wasserstein-optimal approximation can lead to positive
regret even when another tree with the same node budget
preserves the original optimum.
\end{example}

\begin{figure}[tbp]
    \centering
    \subfloat[Full information\label{fig:regret-example-original}]{%
        \includegraphics[height=0.30\textwidth]{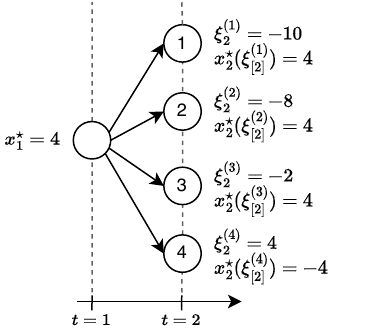}}
    \hfill
    \subfloat[Distance-based grouping\label{fig:regret-example-pairs}]{%
        \includegraphics[height=0.30\textwidth]{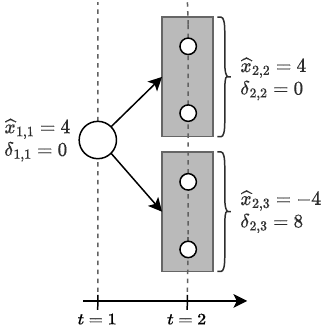}}
    \hfill
    \subfloat[Regret-based grouping\label{fig:regret-example-signs}]{%
        \includegraphics[height=0.30\textwidth]{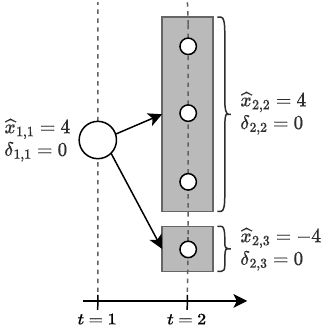}}
    \caption{Information coarsening in Example~\ref{ex:regret}.
    The four second-stage realizations are equally likely and appear in the
    same order in all panels. Shaded regions group realizations that share
    a decision. Panel (a): Full information yields $v^\star=-16$.
    Panel (b): The distance-minimizing grouping $\{-10,-8\}$ and $\{-2,4\}$ gives $\hat v(\kappa)=-12$:
    node $3$ has conditional regret $8$ and probability $1/2$, so the
    optimality gap is $4$.
    Panel (c): Grouping $\{-10,-8,-2\}$ and $\{4\}$ preserves the optimal
    decisions and yields zero regret.}
    \label{fig:regret-example}
\end{figure}

\section{Regret-Based Scenario Tree Construction}\label{sec:tree_construction}
This section develops a practical tree-construction algorithm
based on the regret decomposition of Proposition~\ref{lem:error_decomp}.
We first introduce learned decision-value functions to estimate
regret, then use these estimates to guide stagewise refinement of the tree.
During construction, temporary decisions determine the incoming
conditions used to build subsequent stages. We call these decisions
auxiliary because, once the tree is complete, its stochastic program
optimizes all decisions again using the original costs and constraints.

\subsection{Surrogate Decision-Value Functions}
Computing the true decision-value functions is generally as
difficult as solving the underlying multistage problem. To make
this criterion practical, we use parametric surrogates
$\widetilde Q_{t,\theta}$ learned offline, where $\theta$ denotes
the learned parameters.
For each incoming decision $x$, let
$\widetilde X_t(x)\subseteq X_t(x)$ be a nonempty set of candidate
decisions on which the surrogate is defined. This may be the full
feasible set or a restricted subset, such as a finite decision set.
Minimizing over the candidate decisions gives the surrogate
cost-to-go function
\begin{equation}\label{eq:surrogate_value}
    \widetilde V_{t,\theta}(x,\hist{t})
    \defeq
    \inf_{x^+\in\widetilde X_t(x)}
    \widetilde Q_{t,\theta}(x^+,\hist{t}).
\end{equation}
For a candidate decision $x^+\in\widetilde X_t(x)$, the surrogate
counterpart of the advantage function \eqref{eq:advantage-function} is
\begin{equation}
\widetilde A_{t,\theta}(x^+;x,\hist{t})
\defeq \widetilde Q_{t,\theta}(x^+,\hist{t})
-\widetilde V_{t,\theta}(x,\hist{t}).
\label{eq:advantage-functions}
\end{equation}
It compares the learned value of $x^+$ with the infimum over
candidate decisions at the same history.
This baseline may exclude better decisions available in the
original feasible set $X_t(x)$.
The nodewise surrogate regret is the conditional mean of this
advantage over the histories assigned to node $i$:
\begin{equation}\label{eq:surrogate_regret}
    \widetilde\delta_{t,i}(x^+;x)
    \defeq
    \E\!\left[
        \widetilde A_{t,\theta}(x^+;x,\hist{t})
        \wmiddle
        \hist{t}\in\histpart{t}{i}
    \right].
\end{equation}

\paragraph{Effect of surrogate error}
The following bound connects total expected surrogate regret
to the true information-coarsening loss.
It shows that the surrogate construction criterion, together
with an allowance for advantage-approximation error, bounds
the optimality gap after optimizing the completed tree.

\begin{corollary}[Optimality-gap bound under regret approximation]
\label{cor:surrogate-regret-bound}
Consider the setting of Proposition~\ref{lem:error_decomp}, with all its assumptions in force.
Assume that all retained nodes have positive probability and that
the true and surrogate value functions are finite and measurable,
with integrable advantages. Suppose that, for a finite constant
$\varepsilon\geq0$ and every $t\in\br{T}$,
\begin{equation}
\sup_{\substack{x\in\mathcal X_{t-1}\\
                x^+\in\widetilde X_t(x)}}
\left(
A_t^\star(x^+;x,\hist{t})
-\widetilde A_{t,\theta}(x^+;x,\hist{t})
\right)
\leq\varepsilon
\label{eq:surrogate-regret-error}
\end{equation}
holds almost surely.
Then, for every feasible collection of auxiliary node decisions
$\mathbf x=\{x_{t,i}\}$ with
$x_{t,i}\in\widetilde X_t(x_{t-1,\pa(i)})$,
the optimality gap after optimizing the tree decisions over the
original feasible sets satisfies
\begin{equation}
0\leq\hat v(\kappa)-v^\star
\leq\sum_{t=1}^T\sum_{i\in\mathcal N_t}
p_i\,
\widetilde\delta_{t,i}
\bigl(x_{t,i};x_{t-1,\pa(i)}\bigr)
+T\varepsilon.
\label{eq:surrogate-tree-bound}
\end{equation}
\end{corollary}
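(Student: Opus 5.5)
The plan is to chain three facts: optimality of the tree problem, the exact decomposition of Proposition~\ref{lem:error_decomp}, and a nodewise comparison of true and surrogate regret via \eqref{eq:surrogate-regret-error}.

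\emph{Lower bound and optimality of the tree.} The lower bound $\hat v(\kappa)\ge v^\star$ is already established: every $\mathcal G_t$-adapted feasible policy is $\mathcal F_t$-adapted. For the upper bound, fix auxiliary decisions $\mathbf x=\{x_{t,i}\}$ with $x_{t,i}\in\widetilde X_t(x_{t-1,\pa(i)})$. Since $\widetilde X_t(x)\subseteq X_t(x)$, this collection is feasible for the tree problem \eqref{eq:discretized-problem}. Proposition~\ref{lem:discretized_DP} then gives
\begin{equation*}
\hat v(\kappa)\le J_\kappa(\mathbf x).
\end{equation*}
This is the only place where ``optimizing over the original feasible sets'' enters.

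\emph{Exact decomposition.} Applying Proposition~\ref{lem:error_decomp} to $\mathbf x$ yields
\begin{equation*}
\hat v(\kappa)-v^\star\le J_\kappa(\mathbf x)-v^\star=\sum_{t=1}^T\sum_{i\in\mathcal N_t}p_i\,\delta_{t,i}\bigl(x_{t,i};x_{t-1,\pa(i)}\bigr).
\end{equation*}
Each $p_i>0$ by assumption, so the conditional expectations defining $\delta_{t,i}$ and $\widetilde\delta_{t,i}$ are well defined.

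\emph{Nodewise comparison.} Fix $t$ and $i\in\mathcal N_t$, and write $x=x_{t-1,\pa(i)}$ and $x^+=x_{t,i}\in\widetilde X_t(x)$. The pair $(x^+,x)$ is admissible in the supremum \eqref{eq:surrogate-regret-error}. Hence, almost surely,
\begin{equation*}
A_t^\star(x^+;x,\hist{t})\le\widetilde A_{t,\theta}(x^+;x,\hist{t})+\varepsilon .
\end{equation*}
Because the pair is deterministic (a constant on the node), no measurable-selection issue arises. Both advantages are integrable, so I take conditional expectations on $\{\hist{t}\in\histpart{t}{i}\}$. Since the event has positive probability, an a.s.\ inequality on the whole space persists on it, and
\begin{equation*}
\delta_{t,i}(x^+;x)\le\widetilde\delta_{t,i}(x^+;x)+\varepsilon .
\end{equation*}
Multiplying by $p_i$ and summing over $i\in\mathcal N_t$ uses $\sum_i p_i=1$, so each stage contributes exactly one $\varepsilon$. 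Summing over $t\in\br{T}$ gives the $T\varepsilon$ term in \eqref{eq:surrogate-tree-bound}.

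The only delicate point is the legitimacy of this conditional-expectation step. One must check that the a.s.\ supremum bound specializes to the particular node pair $(x_{t,i},x_{t-1,\pa(i)})$, and that integrability lets the difference of conditional means be split. Both follow from the pair being constant on the node, from the stated integrability of the advantages, and from $p_i>0$. It is also worth stressing that $\widetilde A_{t,\theta}$ uses the restricted baseline $\widetilde V_{t,\theta}$. This discrepancy causes no difficulty, since it is absorbed entirely into $\varepsilon$ by hypothesis.
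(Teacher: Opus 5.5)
Your proposal is correct and follows essentially the same route as the paper: feasibility of the auxiliary decisions (via $\widetilde X_t(x)\subseteq X_t(x)$) gives $v^\star\le\hat v(\kappa)\le J_\kappa(\mathbf x)$, Proposition~\ref{lem:error_decomp} rewrites $J_\kappa(\mathbf x)-v^\star$ as total nodewise regret, and specializing \eqref{eq:surrogate-regret-error} to each node pair and conditioning gives $\delta_{t,i}\le\widetilde\delta_{t,i}+\varepsilon$, with $\sum_{i\in\mathcal N_t}p_i=1$ producing the $T\varepsilon$ term. Your remarks on why the a.s.\ supremum bound specializes to the constant node pair, and why $p_i>0$ makes the conditioning legitimate, only spell out details the paper leaves implicit.
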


\begin{proof}
Fix any such auxiliary decisions. At each node $i\in\mathcal N_t$,
write $x=x_{t-1,\pa(i)}$. Evaluating
\eqref{eq:surrogate-regret-error} at $x^+=x_{t,i}$ and taking
conditional expectations over that node gives
$\delta_{t,i}(x_{t,i};x)\leq
\widetilde\delta_{t,i}(x_{t,i};x)+\varepsilon$.
Applying Proposition~\ref{lem:error_decomp} to the auxiliary
decisions and using $\sum_{i\in\mathcal N_t}p_i=1$ at every
stage yields
\[
J_\kappa(\mathbf x)-v^\star
\leq
\sum_{t=1}^T\sum_{i\in\mathcal N_t}
p_i\,\widetilde\delta_{t,i}
\bigl(x_{t,i};x_{t-1,\pa(i)}\bigr)+T\varepsilon.
\]
Since $\mathbf x$ is feasible for the final tree problem,
$v^\star\leq\hat v(\kappa)\leq J_\kappa(\mathbf x)$,
which proves the result.
\end{proof}

This bound applies uniformly across admissible partitions and
feasible auxiliary decisions, which need not be optimal for the
completed tree.
It motivates reducing total surrogate regret during construction:
when this term is large relative to $T\varepsilon$, it dominates
the upper bound and offers the greatest scope for tightening it.

The uniform error bound may be conservative because it also
covers decisions never used during construction. Restricting
the error bound to the auxiliary decisions actually used, over
their relevant histories, could yield a tighter guarantee.
In either formulation, the error bound is an assumption that
the training procedure does not itself certify.

\begin{remark}[Comparison with the surrogate-greedy policy]
Let $\widetilde\pi$ denote the surrogate-greedy policy, which
selects the unique minimizer of $\widetilde Q_{t,\theta}$
over $\widetilde X_t(x)$ almost surely at each stage.
If a scenario tree
admits feasible auxiliary decisions with zero total
expected surrogate regret, those decisions agree with
the surrogate-greedy policy almost surely. The greedy
policy can therefore be represented on the tree, and
optimizing the tree decisions cannot yield a higher
expected cost:
\[
\hat v(\kappa)\leq J(\widetilde\pi),
\]
where $J(\widetilde\pi)$ denotes the expected cost of
$\widetilde\pi$ under the same initial conditions and
probability measure.
This gives a sufficient condition for the optimized tree
to match or outperform the greedy policy in expected cost.
\end{remark}

\subsection{Scenario Tree Construction}
With the surrogate regret function defined, two difficulties remain: (i) evaluating the conditional expectations in~\eqref{eq:surrogate_regret}, and (ii) solving the resulting tree-design problem. Our practical method addresses these difficulties by
(i) approximating the conditional expectations using Monte Carlo averages
and (ii) constructing the tree through greedy stagewise refinement.

\paragraph{Monte-Carlo approximation}
To make the conditional expectations in~\eqref{eq:surrogate_regret} computationally tractable, we approximate the sample space using $\{ \omega_k \}_{k=1}^N \subset \Omega$ drawn independently according to $\mathbb P$.
Write $\hist{t}^{(k)}\defeq\hist{t}(\omega_k)$ for the
history of sample $k$.
For each node $i \in \mathcal{N}_t$, let
\[
S_i \defeq \{\, k \in \br{N} \mid \kappa_t(\hist{t}^{(k)}) = i \,\}
\]
denote the indices of its assigned samples. For
$x^+\in\widetilde X_t(x)$, nodewise surrogate regret is
estimated by averaging the surrogate advantage:
\begin{align*}
    \widetilde{\delta}_{t,i}(x^+;x)
    \approx
    \frac{1}{|S_i|}
    \sum_{k\in S_i}
    \widetilde A_{t,\theta}(x^+;x,\hist{t}^{(k)}).
\end{align*}
The empirical node probability is $p_i=|S_i|/N$, and conditional
stage costs are estimated by within-node averages. For fixed partitions
and decisions, these regret averages converge almost surely
at positive-probability nodes under integrability.

\paragraph{Hierarchical clustering formulation}
Motivated by \eqref{eq:surrogate-tree-bound}, we minimize total
empirical surrogate regret over sample groups and auxiliary
candidate decisions within a prescribed node budget:
\begin{align}\label{eq:regret_based_clustering}
    \min_{\{ S_i \}, \{ x_{t,i} \}} \quad &
    \frac{1}{N} \sum_{t=1}^T \sum_{i \in \mathcal N_t} \sum_{k \in S_i}
    \widetilde A_{t,\theta}(x_{t,i}; x_{t-1,\pa(i)},\hist{t}^{(k)}) \\
    \text{s.t.}\quad &
    x_{t,i} \in \widetilde X_t(x_{t-1,\pa(i)}), \qquad t \in \br{T},\ i \in \mathcal N_t, \nonumber\\
    &
    \bigcup_{j\in\ch(i)}S_j=S_i, \qquad
    S_j\cap S_\ell=\varnothing,
    \qquad j\neq\ell\in\ch(i). \nonumber
\end{align}
Here $S_0=\br{N}$ and $x_{0,0}=x_0$. 
The decisions in this construction problem are auxiliary
representatives that serve as provisional predictions of
decisions along the tree. They determine the incoming
conditions used to construct subsequent stages, but do not
fix the implemented policy. After constructing the tree,
we solve its stochastic program over the original feasible sets
$X_t$ to obtain the decisions used by the controller.

\paragraph{Stagewise refinement algorithm}
Jointly solving \eqref{eq:regret_based_clustering} is
computationally demanding because groups and auxiliary decisions
are coupled across stages. We therefore construct the tree
forward in time, following \textcite[\S4.2]{heitsch2009multistage}, using two simplifications.
First, groupings and auxiliary decisions from earlier stages remain fixed.
Second, refinement considers only the current stage's surrogate
regret, omitting the effect of current choices on later groupings.
These simplifications reduce the computational burden by replacing
the joint hierarchical clustering problem with a sequence of
smaller local clustering problems.
At each stage, we allocate children among parents and, for
prescribed child counts, refine each parent's sample groups
and auxiliary decisions independently.

To guide the allocation, we evaluate the current child partition
of each parent $i\in\mathcal N_{t-1}$ through its summed
surrogate advantages:
\begin{equation}
\begin{aligned}
r_{t,i}\left(
\{(S_j,x_{t,j})\}_{j\in\ch(i)};x_{t-1,i}
\right)
\defeq\sum_{j\in\ch(i)}\sum_{k\in S_j}
\widetilde A_{t,\theta}(x_{t,j};x_{t-1,i},\hist{t}^{(k)}).
\end{aligned}
\label{eq:parent-regret}
\end{equation}
We write $r_{t,i}$ when the arguments are clear from the context.
Every parent starts with one child containing all its samples.
We then allocate up to $m_t$ additional children across the entire stage,
so the stage size satisfies
\[
|\mathcal N_t|\leq|\mathcal N_{t-1}|+m_t.
\]
At every iteration, we select the eligible parent with largest $r_{t,i}$ and
recompute its child groups and decisions. The eligible set
$\mathcal E_t$ contains parents admitting a split.
The score $r_{t,i}$ measures the regret of the current grouping
and does not estimate the improvement from a proposed split.
Figure~\ref{fig:scenario_reduction} illustrates this procedure.

For the selected parent $i \in \mathcal{N}_{t-1}$, let $\widetilde S_i$ and
$\widetilde x_{t-1,i}$ denote its fixed sample group and incoming
decision. 
After adding
one child, we recompute all child groups and representative
decisions by approximately solving the local regret-minimization
problem
\begin{align}
\min_{\{(S_j,x_{t,j})\}_{j\in\ch(i)}}\quad&
\sum_{j\in\ch(i)}\sum_{k\in S_j}
\widetilde A_{t,\theta}(x_{t,j};\widetilde x_{t-1,i},\hist{t}^{(k)})
\label{eq:regret_clustering}\\
\text{s.t.}\quad&
x_{t,j}\in\widetilde X_t(\widetilde x_{t-1,i}),
\qquad j\in\ch(i),
\nonumber\\
&
\bigcup_{j\in\ch(i)}S_j=\widetilde S_i, \qquad
S_j\cap S_\ell=\varnothing,
\qquad j\neq\ell\in\ch(i). \nonumber
\end{align}
The local problem resembles a constrained clustering problem, with feasible
decisions serving as representatives and surrogate advantage
measuring assignment cost. We therefore use a Lloyd-style
scheme to solve this problem by alternating between assignments and
representative updates. Starting from a few proposed shared decisions,
we assign each sampled history to the proposal with the lowest predicted
excess cost, then choose for each resulting group the shared decision with
the lowest total predicted excess cost. Write the current child groups
and decisions as $\widetilde S_j$ and $\widetilde x_{t,j}$.
The parent values $\widetilde S_i$ and
$\widetilde x_{t-1,i}$ remain fixed throughout:
\begin{itemize}[leftmargin=*,itemsep=0pt]
\item \textbf{Assignment step.} With representative decisions \( \{ \widetilde x_{t,j} \}_{j\in\ch(i)} \)
fixed, assign each sample to the child with lowest surrogate advantage:
\begin{equation}
\widetilde S_j\gets\left\{k\in\widetilde S_i\wmiddle
j=\argmin_{\ell\in\ch(i)}
\widetilde A_{t,\theta}(\widetilde x_{t,\ell};\widetilde x_{t-1,i},\hist{t}^{(k)})\right\}.
\label{eq:regret-assignment-step}
\end{equation}
\item \textbf{Update step.} With groups \( \{ \widetilde S_j \}_{j\in\ch(i)} \) fixed, choose a feasible
candidate representative minimizing each group's summed surrogate advantages:
\begin{equation}
\widetilde x_{t,j}\gets\argmin_{x^+\in\widetilde X_t(\widetilde x_{t-1,i})}
\sum_{k\in\widetilde S_j}\widetilde A_{t,\theta}(x^+;\widetilde x_{t-1,i},\hist{t}^{(k)}).
\label{eq:regret-update-step}
\end{equation}
\end{itemize}
Efficient construction requires surrogate representations that
make the update in \eqref{eq:regret-update-step} easy to compute.
The case studies use DQN and NAF representations for this purpose.
With exact representative updates,
accepted steps cannot increase local regret and continue until
assignments stabilize or a stopping criterion is met. 
This ensures that refinement cannot
increase the total surrogate regret accumulated through the current stage. Because earlier partitions and decisions are not revisited, the resulting tree is not guaranteed to be globally optimal.
Algorithm~\ref{alg:problem_based_reduction} summarizes the construction procedure.

For shrinking-horizon problems, every decision-rooted subtree
must additionally obey the growth schedule relative to
its own root, so the previous continuation remains an
admissible candidate. Keeping the previous
policy whenever a new one has a higher expected remaining
cost ensures that reoptimization cannot increase expected
cost relative to the initial tree policy \parencite[\S2.4]{bertsekas2020rollout}.

\begin{algorithm}[tbp]
\caption{Regret-Based Tree Construction}\label{alg:problem_based_reduction}
\KwIn{Sample set $\{ \omega_k \}_{k=1}^N$, maximum number of additional nodes per stage $m_t$, initial decision $x_0$}
\KwOut{Scenario tree $\T$ with corresponding $\{\widetilde S_{i} \}_{i \in \mathcal{N}_t,\,t \in \br{T}}$}

$\widetilde{x}_{0,0} \gets x_0$, $\mathcal{N}_0 \gets \{0 \}, \widetilde S_{0} \gets \{ 1, \dots, N \}$\\
\For{$t=1$ \KwTo $T$}{
    \For{ $i\in \mathcal N_{t-1}$}{
        1. Initialize $\ch(i) = \{ j \}$ with one node and $\widetilde S_{j} = \widetilde S_{i}$\\
        2. $\widetilde{x}_{t,j} \gets \text{argmin}_{x^+ \in \widetilde X_t(\widetilde{x}_{t-1,i})} \sum_{k \in \widetilde S_{j}} \widetilde A_{t,\theta}(x^+; \widetilde{x}_{t-1,i},\hist{t}^{(k)})$ \\
        3. $r_{t,i}\gets
        \sum_{j\in\ch(i)}\sum_{k\in\widetilde S_j}
        \widetilde A_{t,\theta}
        (\widetilde x_{t,j};\widetilde x_{t-1,i},\hist{t}^{(k)})$\\
    }
    \For{$l = 1 \dots m_t$}{
          Update $\mathcal E_t$ using the partition and budget constraints \\
          \If{$\mathcal E_t=\varnothing$}{
              \textbf{break} \\
          }
          1. Pick node $i^\star\in\text{argmax}_{i\in\mathcal E_t}\; r_{t,i}$\\
          2. Insert new node $\ch(i^\star) \gets \ch(i^\star) \cup \{ j \}$ \\
          3. Solve \eqref{eq:regret_clustering} to update $\{\widetilde S_{j}\}_{j \in \ch(i^\star)}$, $\{\widetilde{x}_{t,j}\}_{j \in \ch(i^\star)}$, $r_{t,i^\star}$
    }
    $\mathcal N_t\gets
    \bigcup_{i\in\mathcal N_{t-1}}\ch(i)$\
}
\end{algorithm}



\begin{figure}
    \centering
    \includegraphics[width=\textwidth]{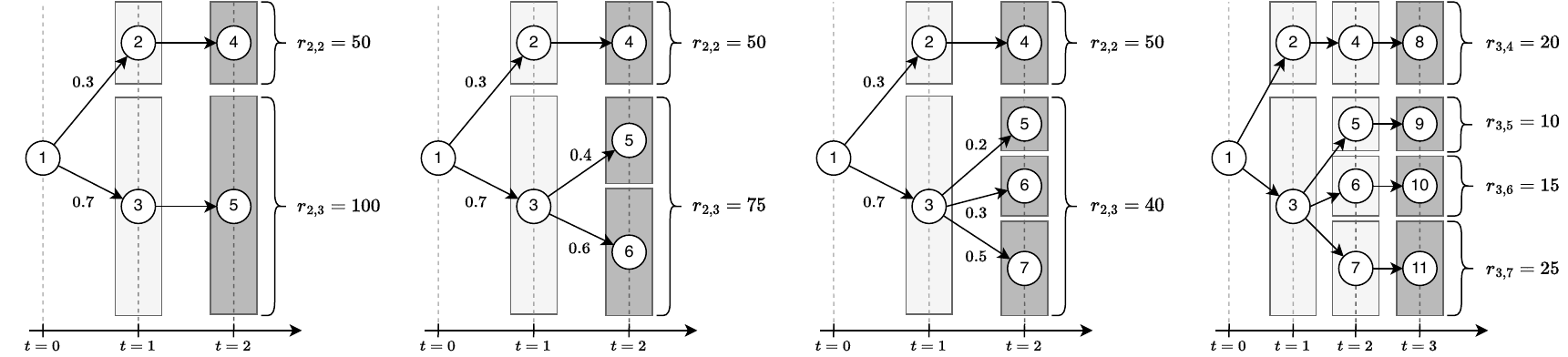}
    \caption{Stagewise refinement with fixed parent groups. Each regret-based refinement adds a child to the eligible parent with the largest current regret, then updates its child groups and representative decisions. Up to $m_t$ additional children are allocated across the stage before construction advances to the next stage.}\label{fig:scenario_reduction}
\end{figure}

\FloatBarrier

\section{Case Studies}\label{sec:results}
We now evaluate the proposed method on a stochastic energy storage problem with synthetic and historical
Belgian imbalance prices, and on a stochastic production planning problem with multidimensional
correlated uncertainty.

\paragraph{Shrinking-horizon implementation}
At each time step, we construct a scenario tree over the remaining
horizon, conditional on the observed history and the preceding decision.
We then solve the resulting stochastic program and implement only its
first-stage decision. After the next observation, we repeat the procedure
with the updated information. Rebuilding the tree allows its limited
budget to be focused on the uncertainty relevant to the current
conditions. Future decisions remain provisional and are revised before
implementation. 

\paragraph{Closed-loop evaluation and metrics}
The tree-based controllers follow the shrinking-horizon
stochastic model predictive control (SMPC) procedure
described above. Since the current observation is known,
each tree has a single first-stage decision node, with
branching beginning at the second stage.

In the experiments, we report mean episode reward (negative cumulative cost), with
$\pm$ values and shaded bands indicating one standard error
across evaluation episodes. For tree-based controllers, we also
report predicted total episode cost, defined at each reoptimization
as the cost already incurred plus the optimized expected remaining
cost $\hat v$. These predictions are averaged over decision times
and episodes.
For the same state, remaining horizon, and uncertainty model,
a lower $\hat v$ indicates a smaller optimality gap and hence
a better scenario-tree approximation.
However, earlier decisions affect both the costs incurred and
the states reached, so comparisons of the reported averages
across controllers do not isolate tree approximation quality.

\paragraph{Surrogate representations and training}
We use two surrogate architectures that support efficient
representative updates. A deep Q-network
(DQN)~\parencite{mnih2015human} provides action-value
estimates over a finite feasible action set, allowing
updates by enumeration. A normalized advantage function
(NAF)~\parencite{gu2016continuous} represents the action
value as a convex quadratic function of the control.

Both architectures are trained through exploratory
simulation using the control formulation of
Example~\ref{ex:optimal-control}. Their inputs contain
the physical state and predictive information, and their
outputs estimate the cost of a chosen control followed
by optimal continuation. Given the incoming state, the
control and system dynamics determine the corresponding
augmented decision \(x_t\), allowing these estimates to
serve as the decision-value surrogates used in tree
construction. The feasible augmented decisions represented
by each architecture form its candidate set $\widetilde X_t(x)$.

After hyperparameter tuning, both networks use one hidden
layer with 256 ReLU units. Training takes approximately
two hours for 100,000 episodes on the specified hardware,
while network evaluation takes only a few milliseconds.
During refinement of a stage, the sampled histories and incoming
representative decisions remain fixed, so the network outputs
can be cached and reused. A complete construction pass therefore
requires at most $NT$ network evaluations.

The scenario-tree optimization problems are solved using
Clarabel~\parencite{paul2024clarabel}.
All experiments are run on a single core of a Mac Studio
with an Apple M3 Ultra chip.
Implementation details and software dependencies are
provided in the accompanying code repository.

\paragraph{Tree sizes and benchmark methods}
Unless stated otherwise, scenario trees are constructed from $N=30{,}000$ Monte Carlo samples, and closed-loop performance is evaluated on $M=1000$ evaluation episodes of length $T=40$. In these experiments, every benchmark uses the same evaluation episodes and the samples are also used to estimate
node probabilities by relative frequencies and conditional stage costs by within-node averages.
To assess sensitivity to the node budget, we use a budget cap $B \in \{ 2, 4, 8, 16, 32 \}$ and define the stagewise growth schedule as
\begin{equation}
    m_1=0, \qquad
    m_t=\min(2^{t-1},B), \quad t\ge2.
\end{equation}
Here $m_t$ caps the number of additional nodes allocated across stage $t$
in Algorithm~\ref{alg:problem_based_reduction}, after assigning one child
to every parent. Thus $B$ limits the number of these additional nodes
per stage, rather than the total tree size or the number of children
per parent.

We compare the proposed regret-based tree construction method against four baselines:
\begin{enumerate}
\item \textbf{sampling-based construction}, which randomly selects additional scenario representatives according to the stagewise budget and assigns the remaining samples to their nearest selected representative.
\item \textbf{distance-based construction}, which uses the stagewise refinement structure with squared Euclidean distance instead of surrogate regret. This construction method minimizes the 2-Wasserstein distance between the original and reduced scenario distributions.
\item \textbf{standalone RL policies}, which select decisions directly from the regret surrogates, without constructing scenario trees or solving multistage stochastic programs.
\item \textbf{deterministic MPC}, which replaces the scenario tree with a single nominal trajectory defined by the stagewise expected uncertainty.
\end{enumerate}
For the sampling- and distance-based methods, Euclidean distances
are computed after rescaling each price coordinate to a common
variance, preventing differences in scale from dominating
the distance measure.
For synthetic storage, a numerical dynamic-programming (DP)
reference also supplies decision values for regret-based
construction. It uses uniform inventory and nominal-power grids,
sampled prices to approximate expectations, and linear interpolation
of continuation values. The standalone DP policy selects actions
directly from these approximate values, whereas DP-guided SMPC uses them
to construct a tree and then solves its stochastic program.

\subsection{Stochastic Energy Storage Optimization}
We consider a stochastic energy storage optimization problem under price uncertainty, a canonical example of multistage stochastic optimization \parencite{bruneel2025multistage}. At each time step, the decision-maker observes the current electricity price and decides whether to charge, remain idle, or discharge to exploit future price fluctuations while respecting the operational constraints of the storage system.

\subsubsection{Problem Description}
At each shrinking-horizon solve, $I_0$ denotes the current stored energy and $t_{\mathrm{rem}}\in\br{T}$ the number of remaining stages.
The problem is
\begin{equation}
    \begin{aligned}
        v^\star
        \defeq
        \inf_{\{u_t^+,u_t^-\}_{t=1}^{t_{\mathrm{rem}}}}
        \quad &
        \mathbb{E}\left[
        \sum_{t=1}^{t_{\mathrm{rem}}}
        \xi_t \left(u_t^+ - u_t^-\right) \Delta t
        + \mu \left(u_t^+ + u_t^-\right) \Delta t
        \right]  \\
        \text{s.t. }
        & I_t = I_{t-1} + \eta u_t^+ \Delta t - \tfrac{1}{\eta}u_t^- \Delta t,
         \\
        & 0 \le I_t \le I_\mathrm{max}, \\
        & I_{t_{\mathrm{rem}}} = 0.5 I_\mathrm{max}, \\
        & u_t^+ \in [0,q],
        \qquad u_t^- \in [0,q],
         \\
        & u_t^+,u_t^- \text{ are } \mathcal{F}_t\text{-measurable},
        \qquad t=1,\ldots,t_{\mathrm{rem}}.
    \end{aligned}
\end{equation}
Here, $\xi_t$ is the uncertain electricity price, $I_t$ the stored energy, and $u_t^+$ and $u_t^-$ the charging and discharging powers.
We use $\Delta t = 15$ min, capacity $I_\mathrm{max}=1$ MWh, power limit $q=0.5$ MW, efficiency $\eta=0.9$, and cycle cost $\mu=125$ EUR/MWh.
Episodes start at $50\%$ capacity and simultaneous charging and discharging are permitted to retain a linear programming formulation.

This problem fits the MSP formulation \eqref{eq:full-problem} by taking the augmented stage decision $x_t=(I_t,u_t^+,u_t^-)$ and including the inventory dynamics and operational constraints in $X_t(x_{t-1})$. Since costs are linear
in price, the tree problem~\eqref{eq:discretized-problem}
uses conditional mean prices at its nodes.

\subsubsection{Storage Optimization with Synthetic Price Data}\label{sec:synthetic_storage}
The synthetic prices are assumed to be stagewise independent and normally distributed with mean $\mu_\xi=100$~EUR/MWh and standard
deviation $\sigma_\xi=90$~EUR/MWh.
Both surrogates use $(I_{t-1}, \xi_t, t_\mathrm{rem})$ as input.
DQN uses three nominal actions corresponding to charge $q$,
discharge $q$, or hold, while NAF uses continuous requests in
$[-q,q]$. Each request is converted into a proposed next inventory,
clipped to satisfy capacity limits and preserve terminal reachability,
and converted back into feasible power. Thus, even a full-power
or hold request can produce a partial or corrective action.
The corresponding augmented decisions form the candidate set
$\widetilde X_t(x)$.

Figure~\ref{fig:dqn_training} shows the validation reward during training
and compares the learned advantage functions with the numerical DP reference.
Figure~\ref{fig:clustering} illustrates how these estimates affect scenario aggregation. Cycle costs and conversion losses discourage trading on price fluctuations with small expected gains. Regret-based construction therefore places partition boundaries near the thresholds between holding, charging, and discharging. This is also reflected in the topology of the resulting scenario tree as shown in Figure~\ref{fig:scenario_tree_structure}.

Figure~\ref{fig:scenario_tree_sizes} shows that the regret-based controllers achieve higher mean rewards than the sampling- and distance-based baselines across the tested node budgets. DP-guided SMPC leads the tree-based methods,
while DQN-guided SMPC leads the learned variants.
Both learned variants improve on their standalone policies,
particularly NAF. Increasing the tree size improves mean
rewards across all tree-based methods.

Table~\ref{tab:tree_construction_times} shows that construction
and solution times increase approximately linearly with the
reported mean node count over the tested budget range.

\begin{figure}
    \centering
    \subfloat[Training reward]{\includegraphics[width=0.32\textwidth]{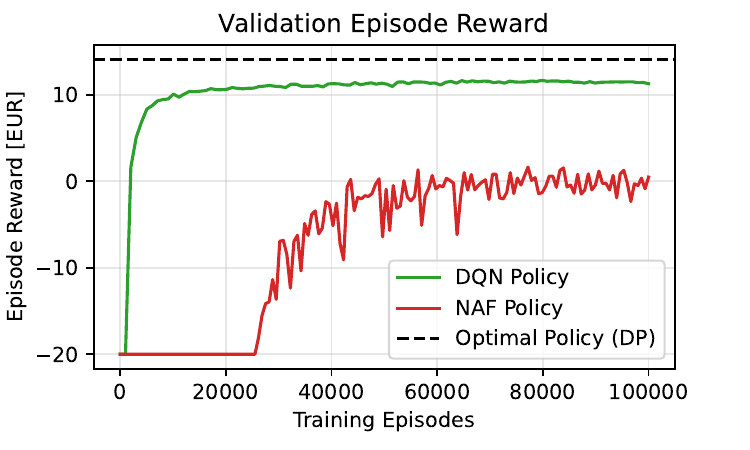}}\hfill
    \subfloat[DQN advantage function]{\includegraphics[width=0.32\textwidth]{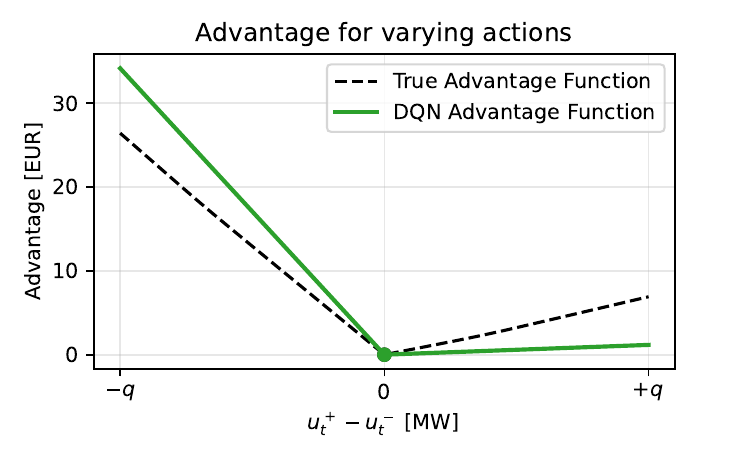}}\hfill
    \subfloat[NAF advantage function]{\includegraphics[width=0.32\textwidth]{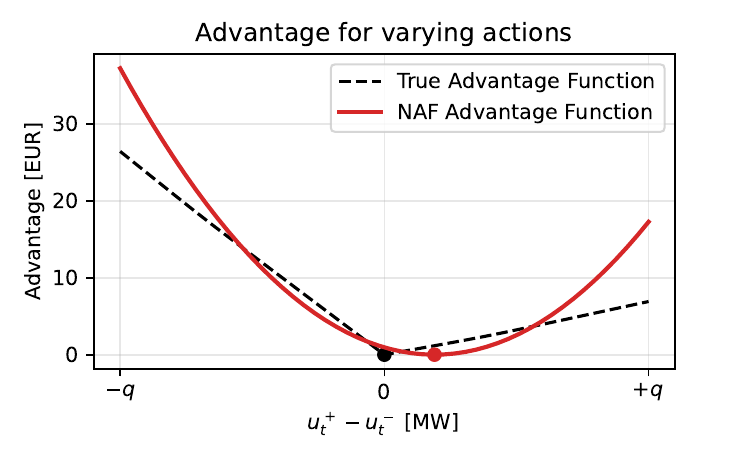}}

    \caption{Training reward and learned advantage functions on the
    synthetic storage problem. Left: validation reward during training.
    Center and right: learned advantage functions compared with the
    numerical DP reference at $t_\mathrm{rem}=20$,
    $I_{t-1}=0.65 I_\mathrm{max}$, and $\xi_t=50$~EUR/MWh.
    The DQN follows the piecewise-linear structure of the reference
    more closely than the quadratic NAF representation.}
    \label{fig:dqn_training}
\end{figure}

\begin{figure}
     \centering
     \includegraphics[width=\textwidth]{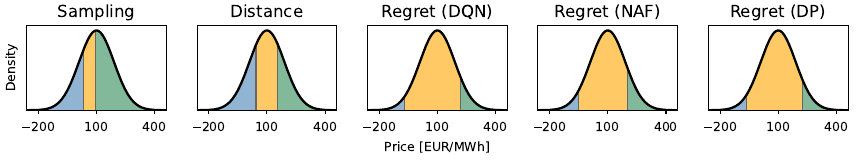}
     \caption{Price partitions induced by the history-to-node mapping
        $\kappa_2$ in the synthetic storage problem, with
        $t_\mathrm{rem}=20$, $I_{t-1}=0.65 I_\mathrm{max}$, in the first stage of uncertainty.
        Each region corresponds to prices assigned to the same node. The regret-based methods lead to very similar partitions.
     }\label{fig:clustering}
\end{figure}

\begin{figure}
    \centering
    \includegraphics[width=\textwidth]{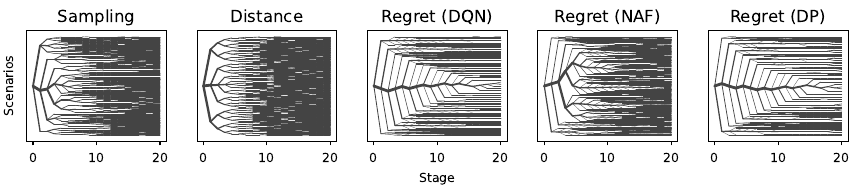}
    \caption{Scenario-tree topologies for $t_{\mathrm{rem}}=20$, $I_{t-1}=0.65 I_\mathrm{max}$ and $B=32$ in the synthetic storage problem of Section~\ref{sec:synthetic_storage}. The distance-based method produces a nearly balanced tree, whereas the regret-based methods yield more uneven branching patterns that reflect the decision relevance of different price regions. In particular, the DQN-based and DP-based trees contain a dominant branch associated with the hold decision, together with smaller branches representing extreme prices that trigger charging or discharging.}\label{fig:scenario_tree_structure}
\end{figure}


\begin{figure}
    \centering
    \subfloat[Episode Reward]{\includegraphics[width=0.5\textwidth]{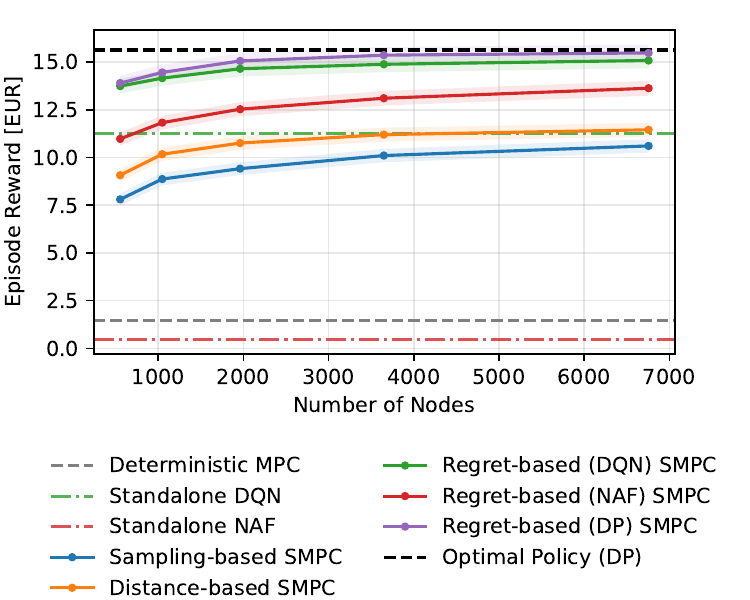}}
    \subfloat[Average Objective $\hat{v}$]{\includegraphics[width=0.5\textwidth]{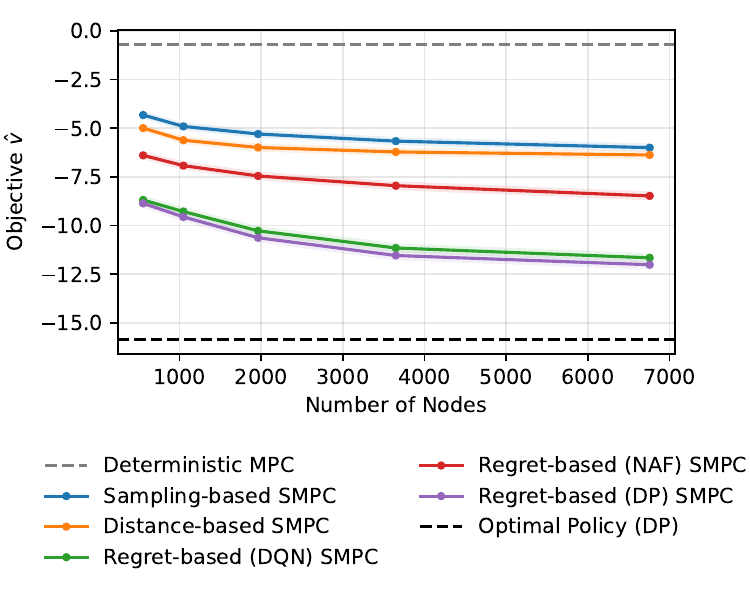}}

    \caption{Effect of scenario-tree size on the synthetic storage problem of Section~\ref{sec:synthetic_storage}:
    mean episode reward (left) and predicted total episode cost (right).
    Predicted costs combine costs already incurred with the optimized
    expected remaining cost and are averaged over decision times and
    episodes. Shaded bands indicate one standard error.}\label{fig:scenario_tree_sizes}
\end{figure}

\begin{table}
    \centering
    \begin{tabular}{@{}rrrrrrr@{}}
    \toprule
    Budget cap $B$ & Mean Nodes & $t_\mathrm{cl,sb}$ & $t_\mathrm{cl,db}$ & $t_\mathrm{cl,dqn}$ & $t_\mathrm{cl,naf}$ & $t_\mathrm{solve}$ \\
    \midrule
    2 & 554 & 0.26s & 0.31s & 0.48s & 0.65s & 0.02s \\
    4 & 1048 & 0.35s & 0.43s & 0.67s & 0.88s & 0.04s \\
    8 & 1961 & 0.53s & 0.66s & 0.97s & 1.30s & 0.08s \\
    16 & 3649 & 0.88s & 1.01s & 1.50s & 2.12s & 0.15s \\
    32 & 6757 & 1.75s & 1.85s & 2.44s & 3.97s & 0.31s \\
    \bottomrule
    \end{tabular}
    \caption{Average per-step computation times for different scenario tree sizes in the synthetic storage problem of Section~\ref{sec:synthetic_storage}. The cap $B$ bounds the additional children allocated across a stage after every parent receives one child. A larger budget cap leads to more nodes in the scenario tree and consequently higher computation times. The columns $t_\mathrm{cl,sb}$, $t_\mathrm{cl,db}$, $t_\mathrm{cl,dqn}$, and $t_\mathrm{cl,naf}$ report the average tree construction times for the sampling-based, distance-based, regret-based DQN, and regret-based NAF methods, respectively. The column $t_\mathrm{solve}$ reports the average time required to solve the resulting SMPC problem.
    }\label{tab:tree_construction_times}
\end{table}

\subsubsection{Storage Optimization with Real Price Data}\label{sec:real_storage}
We consider a storage setup similar to the synthetic case,
using historical Elia imbalance prices from 2023--2025
at 15-minute resolution.
To limit the influence of extreme
price spikes on average episode rewards, observations are clipped
at the 0.5th and 99.5th percentiles.
The dependence model and preprocessing
parameters are fitted using the full 2023--2025 period, and all
methods are evaluated on the same 5000 historical episodes
sampled from that period.

To generate uncertainty scenarios, prices are standardized using the rolling
mean and standard deviation over the preceding 90 days.
At each reoptimization, the method of \textcite{pinson2009probabilistic} is used to generate temporally correlated
standardized price paths conditional on the observed standardized
price history, using the fitted dependence model.
The resulting paths are transformed back to the original price
scale using the rolling mean and standard deviation.
The surrogates are trained on synthetic trajectories
generated by the fitted dependence model. Their inputs
are augmented with the rolling statistics and the eight
most recent historical prices. Evaluating the resulting controllers
on historical episodes provides a retrospective comparison
under a common estimated price model. 

Figure~\ref{fig:scenario_tree_sizes_real} shows results
broadly consistent with the synthetic experiments, with
the regret-based methods achieving the highest mean rewards.
However, the lower predicted episode costs of NAF-guided SMPC
relative to distance-based SMPC do not necessarily translate
into higher realized rewards.
One possible explanation is that the assumed time-invariant distribution of standardized prices does not fully capture changes in the underlying price dynamics.
More flexible price models could better capture
the changing dynamics \parencite{bruneel2025optimal},
but are outside the scope of this study.



\begin{figure}
    \centering
    \subfloat[Episode Reward]{\includegraphics[width=0.5\textwidth]{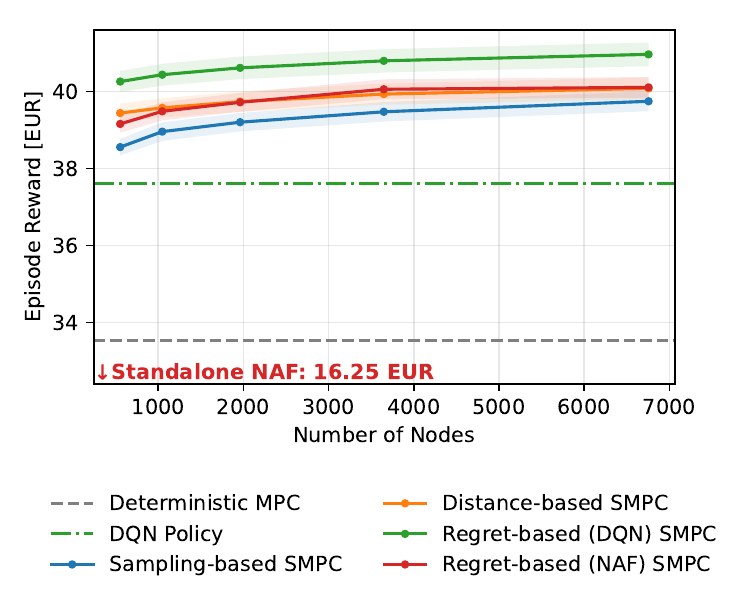}}
    \subfloat[Average Objective $\hat{v}$]{\includegraphics[width=0.5\textwidth]{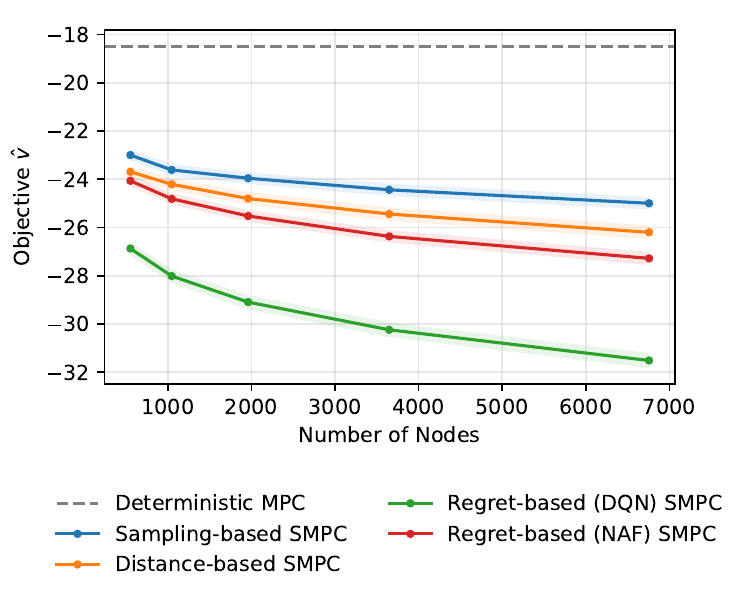}}

    \caption{Mean episode reward and predicted total episode cost
    for the energy storage example of Section~\ref{sec:real_storage} across scenario-tree sizes.
    Results are based on 5000 historical episodes from
    2023--2025, the period also used to fit the price model.
    Predicted costs are averaged over decision times and episodes.
    Shaded bands indicate one standard error.}
\label{fig:scenario_tree_sizes_real}
\end{figure}

\subsection{Stochastic Production Planning}\label{sec:stochastic_production}
Finally, we evaluate the proposed method on a stochastic production planning problem with multidimensional, correlated uncertainty. A finished product is assembled from $J=4$ components with stochastic input prices. Production requires a fixed lead time of $L=5$ time steps, and the finished product is sold at a stochastic price. The objective is to maximize expected profit over a horizon of $T=40$ stages.

This setting is a useful test bed for regret-based scenario-tree construction because the relevance of each uncertainty component is dependent on the current inventory levels. For example, when no finished products are in stock, the selling price has little immediate effect on the current decision. An efficient scenario tree should therefore avoid allocating excessive resolution to the selling price in such settings and instead focus on modeling the other prices.


\subsubsection{Problem Description}
Let $E_t\in\mathbb{R}^J_+$ denote component inventories, $W_t\in\mathbb{R}^L_+$ the work-in-progress pipeline, and $F_t\in\mathbb{R}_+$ the finished-goods inventory at the end of stage $t$. Given the incoming inventories, the controller chooses component purchases $u^{\mathrm{b}}_t\in\mathbb{R}^J_+$, production starts $u^{\mathrm{p}}_t\in\mathbb{R}_+$, and sales $u^{\mathrm{s}}_t\in\mathbb{R}_+$.
The uncertainty vector $\xi_t = (c_t, a_t, s_t)$ consists of component prices $c_t\in\mathbb{R}^J$, assembly cost $a_t\in\mathbb{R}$, and a residual product spread $s_t\in\mathbb{R}$. The finished-product selling price is
\[
    r_t = \mathbf{1}^\top c_t + a_t + s_t,
\]
so that the output price is strongly linked to the input costs while the spread $s_t$ captures the decision-relevant margin. The objective is to maximize the expected total profit over the remaining horizon $t_{\mathrm{rem}}\in\br{T}$, subject to inventory dynamics and operational constraints:
\begin{equation}
    \begin{aligned}
        v^\star
        \defeq
        \inf_{\{u^{\mathrm{b}}_t,u^{\mathrm{p}}_t,u^{\mathrm{s}}_t\}_{t=1}^{t_{\mathrm{rem}}}}
        \quad &
        \mathbb{E}\left[
        \sum_{t=1}^{t_{\mathrm{rem}}}
        c_t^\top u_t^{\mathrm{b}}
        + a_t\, u_t^{\mathrm{p}}
        - r_t\, u_t^{\mathrm{s}}
        \right]  \\
        \text{s.t.}\quad
        & E_{t,j} = E_{t-1,j} + u^{\mathrm{b}}_{t,j} - u^{\mathrm{p}}_t,
        \quad j = 1,\ldots,J,
         \\
        & W_{0,t} = u^{\mathrm{p}}_t,
         \\
        & W_{\ell,t} = W_{\ell-1,t-1},
        \quad \ell=1,\ldots,L-1,
         \\
        & F_{t} = F_{t-1} + W_{L-1,t-1} - u^{\mathrm{s}}_t,
         \\
        & (u^{\mathrm{b}}_t, u^{\mathrm{p}}_t, u^{\mathrm{s}}_t)
          \in X_t(E_{t-1}, W_{t-1}, F_{t-1}),
         \\
        & u^{\mathrm{b}}_t, u^{\mathrm{p}}_t, u^{\mathrm{s}}_t
          \text{ are } \mathcal{F}_t\text{-measurable},
        \quad t=1,\ldots,t_{\mathrm{rem}}.
    \end{aligned}
\end{equation}
All purchase, production, and sales quantities lie in
$[0,1]$, and inventories must remain nonnegative.
The total quantity in the production pipeline cannot
exceed one unit.
Episodes start with empty component inventories, an empty
production pipeline, and no finished products. Remaining
inventories have no terminal salvage value.

The uncertainty follows a mean-reverting vector autoregression,
\begin{equation}
    \xi_{t+1} = \mu + \Phi(\xi_t-\mu) + \varepsilon_{t+1},
    \qquad
    \varepsilon_{t+1}\sim\mathcal N(0,\Sigma).
\end{equation}
The matrix $\Phi$ is diagonal. For each component price,
the assembly cost and the product spread,
the (mean, AR coefficient, innovation standard deviation)
triples are $(1,0.20,0.12)$, $(5,0.25,1)$, and
$(-3,0.10,5)$, respectively. Distinct component innovations have
correlation $0.15$. All other innovation cross-correlations
are zero. Similar to the energy storage problem, the stage cost is linear in the uncertainty, so that the information-coarsened problem~\eqref{eq:discretized-problem} reduces to a linear program parametrized by the conditional mean of $\xi_t$ at each node.
An equivalent MSP representation uses the augmented stage decision
$x_t=(E_t,W_t,F_t,u_t^{\mathrm b},u_t^{\mathrm p},u_t^{\mathrm s})$.
The inventory and pipeline equations, together with the admissible-control constraints, define the corresponding mapping $X_t(x_{t-1})$.

\subsubsection{Results}
Both surrogate models use the input
$(E_{t-1}, W_{t-1}, F_{t-1}, \xi_t, t_\mathrm{rem})$.
DQN uses $2^6=64$ nominal action combinations, with each dimension
selecting either zero or its maximum request. The resulting candidates
are subject to the joint inventory and pipeline constraints.
NAF uses continuous controls subject to the same constraints.

Figure~\ref{fig:scenario_tree_sizes_assembly} summarizes
the results on 1000 evaluation episodes. The regret-based
controllers achieve higher mean rewards than the sampling-
and distance-based baselines across the tested node budgets.
DQN-guided SMPC leads the tree-based methods and exceeds
standalone DQN at larger budgets. Distance-based construction
performs worse than sampling-based construction despite
variance normalization.

At $B=32$, mean computation times per reoptimization,
including scenario generation, tree construction, and
optimization, are 6.32, 6.54, 6.42, and 7.04~s for
sampling-based, distance-based, DQN-guided, and NAF-guided
SMPC, respectively. Deterministic MPC requires 6~ms,
while standalone DQN and NAF evaluations require
0.163 and 0.201~ms, respectively.


\begin{figure}
    \centering
    \subfloat[Episode Reward]{\includegraphics[width=0.5\textwidth]{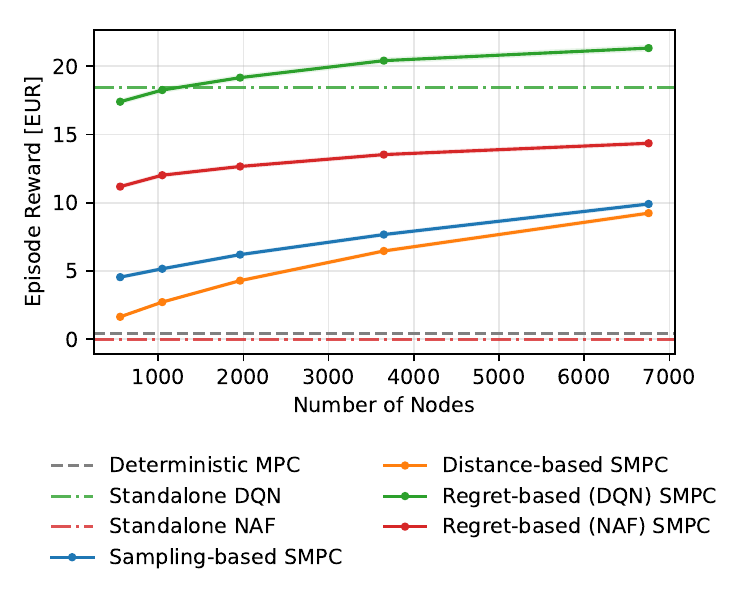}}
    \subfloat[Average Objective $\hat{v}$]{\includegraphics[width=0.5\textwidth]{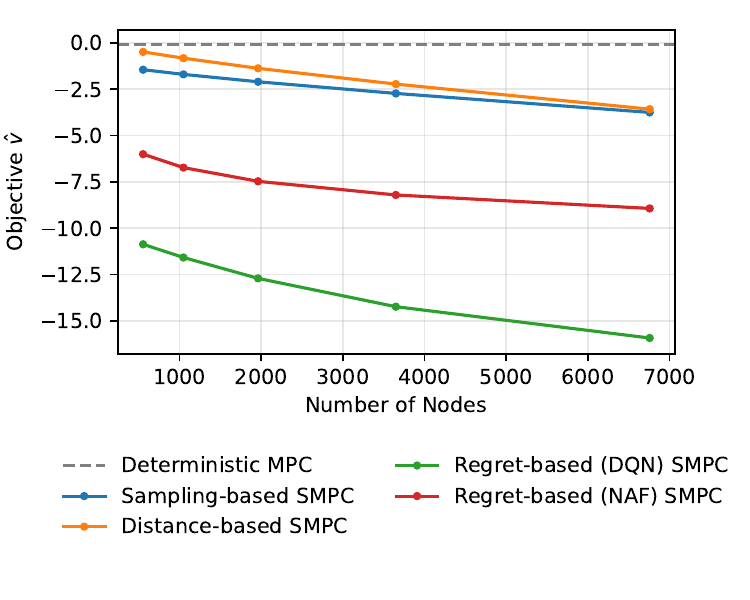}}

    \caption{Mean episode reward and predicted total episode cost
    across scenario-tree sizes for the production-planning problem of Section~\ref{sec:stochastic_production}
    on 1000 evaluation episodes. Predicted costs are averaged over
    decision times and episodes. Shaded bands indicate one standard error.}\label{fig:scenario_tree_sizes_assembly}
\end{figure}

\section{Discussion}

\paragraph{Key findings}
The experiments support allocating tree resolution according to the cost of sharing decisions.
Histories with
substantially different uncertainty realizations can share
a node when a common decision incurs little regret, allowing
more nodes to be allocated where aggregation causes larger
decision losses.
In practice,
larger trees should be justified by their closed-loop gains
relative to the additional construction and solution time.

The surrogate serves a different purpose in tree construction
than in standalone action selection. During construction, they guide aggregation and branch allocation,
while the resulting tree problem determines the
implemented decisions. Poor standalone action
selection therefore need not imply poor construction quality.

\paragraph{Limitations}
The proposed approach targets information-coarsening loss
under an assumed probability model. Increasing tree
resolution does not correct misspecification of that model,
so a lower predicted episode cost need not translate into
better realized closed-loop performance. Distributionally
robust extensions with cost-aware ambiguity
sets~\parencite{schuurmans2023costaware} could mitigate
the effects of errors in the estimated distribution.

The stagewise construction and growth schedule $m_t$ remain
heuristic.
Early partitions and representative
decisions determine the incoming conditions used to construct
later stages, which can lead to suboptimal tree structures if early decisions are not well-informed.
Adaptive budget allocation and backward refinement could
improve construction by revisiting these choices.
Extending the construction to
stochastic constraints additionally requires a tractable way to ensure
feasibility for almost every history assigned to each node.

\section{Conclusion}
This paper showed that, under a fixed probability measure and deterministic feasibility constraints, the optimality gap caused by scenario-tree information coarsening decomposes exactly into a sum of nodewise regret terms, each quantifying the local decision loss. This decomposition provides a principled target for constructing a scenario tree: minimize the total regret caused by grouping histories rather than relying solely on statistical distance.

Building on this result, we developed a two-phase framework that learns surrogate regret functions offline via reinforcement learning and uses them online to guide stagewise tree construction through regret-based clustering. The resulting algorithm generalizes classical distance-based methods by replacing geometric proximity with estimated decision loss.

Experiments on storage systems and production planning demonstrate benefits over the considered statistical construction methods under the tested node budgets. The gains depend on the surrogate, tree size, and uncertainty model, and can persist even when the corresponding standalone learned policy performs poorly. These findings support using learned decision information to guide scenario-tree construction as a complement to multistage optimization.

\section*{Acknowledgements}
This work was supported by Flanders Innovation \&
Entrepreneurship (VLAIO) under Baekeland mandate
HBC.2022.0695
and the KU Leuven Research Council under grant C14/24/103.
The work of M. Schuurmans was supported by an FWO postdoctoral fellowship under project number 1248626N.

\section*{CRediT authorship contribution statement}
\textbf{Robin Bruneel:}
Writing -- original draft, Validation, Software,
Methodology, Investigation, Formal analysis, Data curation,
Conceptualization, Visualization, Funding acquisition.
\textbf{Mathijs Schuurmans:}
Writing -- review \& editing, Conceptualization, Methodology.
\textbf{Panagiotis Patrinos:}
Supervision, Conceptualization, Methodology.

\section*{Declaration of competing interest}
Robin Bruneel reports financial support was provided by Amplifino. If there are other authors, they declare that they have no known competing financial interests or personal relationships that could have appeared to influence the work reported in this paper.

\section*{Declaration of generative AI and AI-assisted
technologies in the manuscript preparation process}
During the preparation of this work, the authors used ChatGPT in order to assist with writing. After using this tool/service, the authors reviewed and edited the content as needed and take full responsibility for the content of the published article.

\section*{Data and code availability}
The historical Belgian imbalance prices used in this study
are publicly available from Elia at
\url{https://opendata.elia.be}.
The code supporting this study is available at
\url{https://github.com/BruneelRobin/regret-based-scenario-tree-construction}.

\printbibliography[title={References}]

\end{document}